\documentclass[leqno,12pt]{article} %leqno is the option to put formula numbers on the left side
\setlength{\textheight}{23cm}
\setlength{\textwidth}{16cm}
\setlength{\oddsidemargin}{0cm}
\setlength{\evensidemargin}{0cm}
\setlength{\topmargin}{0cm}
\usepackage{amsmath, amssymb}
\usepackage{amsthm} %theorem environment option
%
 %changing the interline spacing
 %footnote counter
\newcommand\supp{\mathop{\rm supp}}
\newcommand\esssup{\mathop{\rm ess \, sup}}
\newcommand\essinf{\mathop{\rm ess \, inf}}

%
%%%%%%%%% Theorem-like environments %%%%%%%%%%%
%
\theoremstyle{plain} %text of this environment is typesetted in italics
\newtheorem{theorem}{\indent\sc Theorem}[section]
\newtheorem{lemma}[theorem]{\indent\sc Lemma}

\newtheorem{proposition}[theorem]{\indent\sc Proposition}

\theoremstyle{definition} %text of this environment is typesetted in roman letters
\newtheorem{definition}[theorem]{\indent\sc Definition}
\newtheorem{remark}[theorem]{\indent\sc Remark}
\newtheorem{example}[theorem]{\indent\sc Example}

%
%If a theorem-like environment should not be numbered,
%add * after \newtheorem, and delete the counter option such as [theorem].

%
%%%%% Proof %%%%%

%The following commands are available in the proof environment:
%\begin{proof}
%\end{proof}
%The end of a proof is marked with a square.
%%%%%%%%%%%%%%%%%%%%%%%%%%%%%%%%%%%%%%%%%
\makeatletter
%The following command is available for address and e-mail address:
%\address{<address>}{<E-mail address>}
%%%%%%%% definition of "\address" command %%%%%%%%%%%
%
%%%%%%%%%%
\makeatother
%%%%%%%%%%%%%%%%%%%%%%%%%%%%%%%%%%%%%%%%%
%
\pagestyle{myheadings}
\markright{ } %title of the running head option
\title{Fractional type operators on the Heisenberg group} %title of the paper
\author{
%
%\small{Dedicated to Professor Xxx Yyy on his sixtieth birthday} dedication if necessary
%
\text{Pablo Rocha} %names of authors
}
\date{e-mail: pablo.rocha@uns.edu.ar} %leave empty
%
%%%%%%%%%%%%%%%%%%%%%%%%%%%%%%%%%%%%%%%%%

\begin{document}

\maketitle

%%%%%%%%%%%%%%% footnote %%%%%%%%%%%%%%%%
\footnote{ %2020 MSC numbers
2020 \textit{Mathematics Subject Classification}: 42B25, 42B30, 42B35, 43A80.
}
\footnote{ %key words and phrases
\textit{Key words and phrases}:
variable Hardy spaces, atomic decomposition, fractional operators, Heisenberg group.
}
%\footnote{ %acknowledgment of support etc. if any
%$^{*}$Thanks.
%}
%%%%%%%%%%%%%%%%%%%%%%%%%%%%%%%%%%%%%%%%%

\begin{abstract}
Let $\rho(\cdot)$ be the Koranyi norm on the Heisenberg group $\mathbb{H}^{n} \equiv (\mathbb{R}^{2n} \times \mathbb{R}, \, \cdot \, )$ defined by
\[
\rho(x,t) = \left( |x|^{4} + 16 t^{2} \right)^{1/4}, \,\,\,\, (x,t) \in \mathbb{H}^{n}.
\] 
For $0 \leq \alpha < Q:=2n+2$, $m \in \mathbb{N} \cap \left(1 - \frac{\alpha}{Q}, \infty \right)$, and $m$ positive constants 
$\alpha_1, ..., \alpha_m$ such that $\alpha_1 + \cdot \cdot \cdot + \alpha_m = Q - \alpha$, we consider the following generalization of the Riesz potential on $\mathbb{H}^{n}$ 
\[
T_{\alpha, \, m}f(x,t) = \int_{\mathbb{H}^{n}} f(y,s) \prod_{j=1}^{m} \rho\left((A_j y, r_j^{-2} s)^{-1} \cdot ( x, t)\right)^{-\alpha_j} \, dy \, ds, 
\]
where, in the case $0 < \alpha < Q$, the $A_j$'s are matrices belonging to $Sp (2n, \mathbb{R}) \cap SO(2n)$ and $r_j = 1$ for every 
$j=1, ..., m$; for $\alpha = 0$, we consider $A_j = r_j^{-1} \, I_{2n \times 2n}$ for every $j=1, ..., m$, where the $r_j$'s are positive
constants such that $r_{i}^{2} - r_{j}^{2} \neq 0$ if $i \neq j$. In this note we study the behavior of these operators on variable 
Hardy spaces in $\mathbb{H}^{n}$.
\end{abstract}

\section{Introduction}
Let $J$ be the $2n \times 2n$ skew-symmetric matrix given by 
\begin{equation} \label{matrix J}
J = \frac{1}{2} \left( \begin{array}{cc}
                           0 & -I_n \\
                           I_n & 0 \\
                           \end{array} \right),
\end{equation}
where $I_{n}$ is the $n \times n$ identity matrix. The Heisenberg group $\mathbb{H}^{n}$ is $\mathbb{R}^{2n} \times \mathbb{R}$ endowed with the following group law (noncommutative)
\begin{equation} \label{group law}
(x,t) \cdot (y,s) = (x+y, \, t+s + x^{t}Jy).
\end{equation}
With this structure we have that $e=(0,0)$ is the neutral element and $(x,t)^{-1} = (-x, -t)$ is the inverse of $(x,t)$. 
																			
The \textit{Koranyi norm} on the Heisenberg group $\mathbb{H}^{n}$ is defined by
\[
\rho(x,t) = \left( |x|^{4} + 16 \, t^{2}  \right)^{1/4}, \,\,\, (x,t) \in \mathbb{H}^{n}.
\]
For $0 < \alpha < Q := 2n+2$ and $(x,t) \in \mathbb{H}^{n}$, the Riesz potential $\mathcal{R}_{\alpha}$ on $\mathbb{H}^{n}$ is defined 
by right-convolution with the kernel $\rho(\cdot)^{\alpha - Q}$ 
\[
\mathcal{R}_{\alpha}f (x,t) = \left( f \ast \rho(\cdot)^{\alpha - Q} \right)(x,t) = \int_{\mathbb{H}^{n}} f(y,s) 
\rho \left((y,s)^{-1} \cdot (x,t) \right)^{\alpha - Q} dy \, ds.
\]
It is well known that the Heisenberg group is a homogeneous group in the sense of G. Folland and E. Stein \cite{Folland}. There the 
authors defined Hardy spaces on homogeneous groups, $H^{p_0}(G)$ ($0 < p_0 < \infty$), and studied the behavior of operators defined by 
right-convolution with kernels of type $(\alpha, N)$ on the spaces $H^{p}(G)$ (see \cite[p. 184]{Folland}). In particular, when 
$G = \mathbb{H}^{n}$, we have that the kernel $\rho(\cdot)^{\alpha - Q}$ is of type $(\alpha, N)$ for $0 < \alpha < Q$ and every 
$N \in \mathbb{N}$. Thus, from \cite[Proposition 6.2]{Folland}, it follows that the Reisz potential $\mathcal{R}_{\alpha}$ is bounded from 
$L^{p_0}(\mathbb{H}^{n})$ into $L^{q_0}(\mathbb{H}^{n})$ for $1 < p_0 < Q/\alpha$ and $1/q_0 = 1/p_0 - \alpha/Q$ (see also 
\cite[p. 198]{Krantz}); and, by \cite[Theorem 6.10]{Folland}, we have that $\mathcal{R}_{\alpha}$ is bounded from $H^{p_0}(\mathbb{H}^{n})$ into $H^{q_0}(\mathbb{H}^{n})$ for $0 < p_0 \leq 1$ and $1/q_0 = 1/p_0 - \alpha/Q$. 

The variable Hardy spaces on the Heisenberg group $H^{p(\cdot)}(\mathbb{H}^{n})$ were defined by J. Fang and J. Zhao in \cite{Fang}. One of their main goals is the atomic characterization of $H^{p(\cdot)}(\mathbb{H}^{n})$, as an application of such characterization they proved the boundedness on $H^{p(\cdot)}(\mathbb{H}^{n})$ of singular integrals. Recently, with this framework, the author in \cite{Pablo}, for 
$0 \leq \alpha < Q=2n+2$, $N \in \mathbb{N}$ and H\"older exponent functions $p(\cdot) : \mathbb{H}^{n} \to (0, +\infty)$ such that 
$\frac{Q}{Q+N} < p_{-} \leq p(\cdot) \leq p_{+} < \frac{Q}{\alpha}$, proved the $H^{p(\cdot)}(\mathbb{H}^{n}) \to L^{q(\cdot)}(\mathbb{H}^{n})$ and $H^{p(\cdot)}(\mathbb{H}^{n}) \to H^{q(\cdot)}(\mathbb{H}^{n})$ boundedness of right-convolution operators with kernels of type 
$(\alpha, N)$ on $\mathbb{H}^{n}$, where $\frac{1}{q(\cdot)} = \frac{1}{p(\cdot)} - \frac{\alpha}{Q}$. In particular, the Riesz potential 
$\mathcal{R}_{\alpha}$ ($0 < \alpha < Q$) on $\mathbb{H}^{n}$ satisfies such estimates.

In this note, for $0 \leq \alpha < Q=2n+2$, $m \in \mathbb{N} \cap \left(1 - \frac{\alpha}{Q}, \infty \right)$, and $m$ positive constants 
$\alpha_1, ..., \alpha_m$ such that $\alpha_1 + \cdot \cdot \cdot + \alpha_m = Q - \alpha$, we consider the following generalization of the Riesz potential on $\mathbb{H}^{n}$ 
\begin{equation} \label{Tm}
T_{\alpha, \, m}f(x,t) = \int_{\mathbb{H}^{n}} f(y,s) \prod_{j=1}^{m} \rho\left((A_j y, r_j^{-2} s)^{-1} \cdot (x, t)\right)^{-\alpha_j} \, dy \, ds, 
\end{equation}
where, in the case $0 < \alpha < Q$, the $A_j$'s are matrices belonging to $Sp (2n, \mathbb{R}) \cap SO(2n)$ and $r_j = 1$ for every $j=1, ..., m$; for $\alpha = 0$, we consider $A_j = r_j^{-1} \, I_{2n \times 2n}$ for every $j=1, ..., m$, where the $r_j$'s are positive constants such that $r_{i}^{2} - r_{j}^{2} \neq 0$ if $i \neq j$. We observe that if $0 < \alpha < Q$, $m=1$ and 
$A_1 = I_{2n \times 2n}$, then $T_{\alpha, \, 1} = \mathcal{R}_{\alpha}$; and if $0 \leq \alpha < Q$ and $m \geq 2$, then the 
operator $T_{\alpha, m}$ is not a convolution type operator. The operator given in (\ref{Tm}) is inspired by the following operator 
defined on $\mathbb{R}^{n}$
\begin{equation} \label{Um}
U_{\alpha, \, m} f(x) = \int_{\mathbb{R}^{n}} f(y) \prod_{j=1}^{m} |x - A_j y|^{-\alpha_j} \, dy,
\end{equation}
where $0 \leq \alpha < n$, $m \in \mathbb{N} \cap (1- \frac{\alpha}{n}, \infty)$, $| \cdot |$ is the Euclidean norm on $\mathbb{R}^{n}$, the matrices $A_j$'s are invertible with $A_i - A_j$ also invertible if $i \neq j$, and the $\alpha_j$'s are positive numbers such that 
$\alpha_1 + \cdot \cdot \cdot + \alpha_m = n - \alpha$. The operator $U_{\alpha, \, m}$ given by (\ref{Um}) is a kind of generalization of the Riesz potential on $\mathbb{R}^{n}$. In \cite{RoUr}, the author jointly with M. Urciuolo proved the 
$H^{p_0}(\mathbb{R}^{n}) \to L^{q_0}(\mathbb{R}^{n})$ boundedness of $U_{\alpha, m}$ for $0 < p_0 < n/\alpha$ and $1/q_0 = 1/p_0 - \alpha/n$; we also showed that the $H^{p_0}(\mathbb{R}) \to H^{q_0}(\mathbb{R})$ boundedness does not hold for $0 < p_0 \leq \frac{1}{1+\alpha}$, 
$\frac{1}{q_0} = \frac{1}{p_0} - \alpha$ and $U_{\alpha, m}$ defined on $\mathbb{R}$ with $0< \alpha < 1$, $m=2$, $A_1=1$ and $A_2 = -1$. This 
is a significant difference with respect to the case $0< \alpha < 1$, $n=m=1$ and $A_1=1$; indeed, in this case the operator $U_{\alpha, m}$ coincides with the Riesz potential on $\mathbb{R}$, which is bounded from $H^{p_0}(\mathbb{R})$ into $H^{q_0}(\mathbb{R})$ as it is well known (see \cite{Taibleson, Skrantz}). In the context of variable exponents (see \cite{Nakai, Cruz-Uribe2}), the author and M. Urciuolo 
in \cite{RochaUr} proved the $H^{p(\cdot)}(\mathbb{R}^{n}) \to L^{q(\cdot)}(\mathbb{R}^{n})$ boundedness of the operator $U_{\alpha, m}$ and the $H^{p(\cdot)}(\mathbb{R}^{n}) \to H^{q(\cdot)}(\mathbb{R}^{n})$ boundedness of Riesz potential via the infinite atomic and molecular decomposition developed in \cite{Nakai}. In \cite{Rocha3}, the author gave another proof of the results obtained in \cite{RochaUr}, but by using the finite atomic decomposition developed in \cite{Cruz-Uribe2}.

In this note, by using the frameworks developed in \cite{Folland} and \cite{Fang} joint with some additional results obtained by the author in \cite{Pablo}, we will prove the following theorem.

\vspace{0.3cm}
{\sc Theorem} \ref{Hp-Lq} 
{\it Let $0 \leq \alpha < Q$, $m \in \mathbb{N} \cap (1- \frac{\alpha}{Q}, \infty)$ and let $T_{\alpha, \, m}$ be the operator 
defined by (\ref{Tm}). Suppose $p(\cdot) \in \mathcal{P}^{\log}(\mathbb{H}^{n})$ 
such that $p(A_j x, r_j^{-2} t) = p(x,t)$ for all $(x,t)$ and all $j=1, ..., m$, and $0 < p_{-} \leq p_{+} < \frac{Q}{\alpha}$. 
If $\frac{1}{q(\cdot)} = \frac{1}{p(\cdot)} - \frac{\alpha}{Q}$, then $T_{\alpha, \, m}$ can be extended to a bounded 
operator from $H^{p(\cdot)}(\mathbb{H}^{n})$ into $L^{q(\cdot)}(\mathbb{H}^{n})$.}

\vspace{0.3cm}
This paper is organized as follows. Section 2 introduces the Heisenberg group $\mathbb{H}^{n}$ and also gives some supporting results about the operator $T_{\alpha, m}$, which are crucial to prove the Theorem \ref{Hp-Lq}. In Section 3, we establish the basics on variable Lebesgue and Hardy spaces. Finally, in Section 4 we prove the Theorem \ref{Hp-Lq} and also give, for the case $0 < \alpha < Q$, non-trivial examples of variable exponents $p(\cdot)$ satisfying the condition $p(A_j x, t) = p(x,t)$ for all $(x,t)$ and all
$j=1, ..., m$.

\vspace{0.3cm}
\textbf{Notation:} The symbol $A\lesssim B$ stands for the inequality $A \leq cB$ for some positive constant $c$. The symbol $A \approx B$ stands for $B \lesssim A \lesssim B$. For a measurable subset $E\subseteq \mathbb{H}^{n}$ we denote by $\left\vert E\right\vert $ and 
$\chi_{E}$ the Haar measure of $E$ and the characteristic function of $E$ respectively. Throughout this paper, $C$ will denote a positive real constant not necessarily the same at each occurrence.

\section{Preliminaries}

We recall that the Heisenberg group is $\mathbb{H}^{n} = (\mathbb{R}^{2n} \times \mathbb{R}, \, \cdot \,)$ with the group law $\cdot$ given by 
(\ref{group law}). The dilations in $\mathbb{H}^{n}$ are given by
\[
r \cdot (x,t) = (rx, r^{2}t), \,\,\,\, r>0,
\]
and satisfy
\[
r \cdot ( (x,t) \cdot (y,s) ) = (r \cdot (x,t)) \cdot (r \cdot (y,s)).
\]

$M_{n}(\mathbb{R})$ denotes the algebra of all matrices of degree $n$ with coefficients in $\mathbb{R}$. The symplectic group 
$Sp(2n, \mathbb{R})$ and the special orthogonal group $SO(2n)$ are defined by
\[
Sp(2n, \mathbb{R}) = \{ A \in M_{2n}(\mathbb{R}) : A^{t} J A = J\},
\]
where $J$ is the $2n \times 2n$ skew-symmetric matrix given in (\ref{matrix J}), and
\[
SO(2n) = \{ A \in M_{2n}(\mathbb{R}) : A^{t} = A^{-1} \,\, \text{and} \,\, \det(A) = 1 \}.
\]
The group $Sp(2n, \mathbb{R}) \cap SO(2n)$ acts on $\mathbb{H}^{n}$ by
\[
A \cdot (x,t) = (Ax, t), \,\,\,\, A \in Sp(2n, \mathbb{R}) \cap SO(2n) \,\, \text{and} \,\, (x,t) \in \mathbb{H}^{n};
\]
and satisfy
\[
A \cdot ( (x,t) \cdot (y,s) ) = (A \cdot (x,t)) \cdot (A \cdot (y,s)).
\]

The topology in $\mathbb{H}^{n}$ is the induced by $\mathbb{R}^{2n} \times \mathbb{R}$, so the borelian sets of $\mathbb{H}^{n}$ are identified with those of $\mathbb{R}^{2n} \times \mathbb{R}$. The Haar measure in $\mathbb{H}^{n}$ is the Lebesgue measure of $\mathbb{R}^{2n} \times \mathbb{R}$, thus $L^{p_0}(\mathbb{H}^{n}) \equiv L^{p_0}(\mathbb{R}^{2n} \times \mathbb{R})$, 
$0 < p_0 \leq \infty$. Moreover, for $f \in L^{1}(\mathbb{H}^{n})$ and each $(y,s) \in \mathbb{H}^{n}$
\begin{equation} \label{invariant transl}
\int_{\mathbb{H}^{n}} f((y,s) \cdot (x,t)) \, dx \, dt = \int_{\mathbb{H}^{n}} f((x,t) \cdot (y,s)) \, dx \, dt = 
\int_{\mathbb{H}^{n}} f(x,t) \, dx \, dt, 
\end{equation}
for $r > 0$ fixed, we also have
\[
\int_{\mathbb{H}^{n}} f(r \cdot (x,t)) \, dx \, dt = r^{-Q} \int_{\mathbb{H}^{n}} f(x,t) \, dx \, dt,
\]
where $Q= 2n+2$. The number $2n+2$ is known as the {\it homogeneous dimension} of $\mathbb{H}^{n}$ (we observe that the {\it topological dimension} of $\mathbb{H}^{n}$ is $2n+1$).

The {\it Koranyi norm} on $\mathbb{H}^{n}$ is the function $\rho : \mathbb{H}^{n} \to [0, \infty)$ defined by
\begin{equation} \label{Koranyi norm}
\rho(x,t) = \left( |x|^{4} + 16 \, t^{2}  \right)^{1/4}, \,\,\, (x,t) \in \mathbb{H}^{n},
\end{equation}
where $| \cdot |$ is the usual Euclidean norm on $\mathbb{R}^{2n}$. The Koranyi norm satisfies the following properties
\begin{eqnarray*}
\rho(x,t) & = & 0 \,\,\, \text{if and only if} \,\, (x,t)=(0,0), \\
\rho((x,t)^{-1}) & = & \rho(x,t) \,\,\,\, \text{for all} \,\, (x,t) \in \mathbb{H}^{n} \\
\rho(r \cdot (x,t)) & = & r \rho(x,t) \,\,\,\, \text{for all} \,\, (x,t) \in \mathbb{H}^{n} \,\, \text{and all} \,\, r > 0, \\
\rho(A \cdot (x,t)) & = & \rho(x,t) \,\,\,\, \text{for all} \,\, A \in Sp(2n, \mathbb{R}) \cap SO(2n)  \,\, \text{and all} \,\, (x,t) \in \mathbb{H}^{n}, \\
\rho((x,t) \cdot (y,s)) & \leq & \rho(x,t) + \rho(y,s) \,\,\,\, \text{for all} \,\, (x,t), \, (y,s) \in \mathbb{H}^{n}, \\
| \rho(x,t) - \rho(y,s) | & \leq & \rho((x,t) \cdot (y,s)) \,\,\,\, \text{for all} \,\, (x,t), \, (y,s) \in \mathbb{H}^{n}.
\end{eqnarray*}
Moreover, $\rho$ is continuous on $\mathbb{H}^{n}$ and is smooth on $\mathbb{H}^{n} \setminus \{ e \}$. The $\rho$ - ball centered at 
$(x_0, t_0) \in \mathbb{H}^{n}$ with radius $\delta > 0$ is defined by
\[
B_{\delta}(x_0,t_0) := \{ (y,s) \in \mathbb{H}^{n} : \rho((x_0, t_0)^{-1} \cdot (y,s)) < \delta \}.
\]

\begin{remark}
The topology in $\mathbb{H}^{n}$ induced by the $\rho$ - balls coincides with the Euclidean topology of $\mathbb{R}^{2n} \times \mathbb{R}$ (see
\cite[Proposition 3.1.37]{Fischer}). In particular, the $\rho$ - balls are measurable sets of $\mathbb{H}^{n}$.
\end{remark}

Let $|B_{\delta}(x_0, t_0)|$ be the Haar measure of the $\rho$ - ball $B_{\delta}(x_0, t_0) \subset \mathbb{H}^{n}$. Then, 
\[
|B_{\delta}(x_0, t_0)| = c_0 \delta^{Q},
\]
where $c_0 = |B_1(0,0)|$ and  $Q = 2n+2$. Moreover, for any $(x,t), (x_0, t_0) \in \mathbb{H}^{n}$ and $\delta >0$, we have
$(x, t) \cdot B_{\delta}(x_0, t_0) = B_{\delta}((x,t) \cdot (x_0, t_0))$.

\begin{remark} \label{cambio de centro}
If $f \in L^{1}(\mathbb{H}^{n})$, then for every $\rho$ - ball $B$ and every $(x_0, t_0) \in \mathbb{H}^{n}$, by (\ref{invariant transl}), we have
\[
\int_{B} f(x,t) \, dxdt = \int_{(x_0, t_0)^{-1} \cdot B} f((x_0, t_0) \cdot (x,t)) \, dxdt.
\]
\end{remark}

For $0 < \alpha < Q$ and a locally integrable function $f$ on $\mathbb{H}^{n}$, we define the fractional maximal function $M_{\alpha} f$ 
on $\mathbb{H}^{n}$ by
\[
M_{\alpha}f(x,t) = \sup_{B \ni (x,t)} |B|^{\frac{\alpha}{Q} - 1}\int_{B} |f(y,s)| \, dy \, ds,
\]
where the supremum is taken over all the $\rho$ - balls $B$ containing $(x,t)$. For $\alpha=0$, we have that $M_0 = M$, where $M$ is the Hardy-Littlewood maximal operator on $\mathbb{H}^{n}$. 

\begin{proposition} \label{fract max op} If $0 \leq \alpha < Q$, $1 < p_0 < Q/\alpha$ and $1/q_0 = 1/p_0 - \alpha/Q$, then
\[
\left( \int_{\mathbb{H}^{n}} [M_{\alpha}f(x,t)]^{q_0} dx \, dt \right)^{1/q_0} \leq C \left( \int_{\mathbb{H}^{n}} |f(x,t)|^{p_0} dx \, dt \right)^{1/p_0}.
\]
Moreover, for the case $\alpha = 0$, one also has that $\|M_{0} f\|_{L^{\infty}(\mathbb{H}^{n})} \leq \| f\|_{L^{\infty}(\mathbb{H}^{n})}$ and
$M_0$ is weak type $(1,1)$.
\end{proposition}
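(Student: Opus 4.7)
The plan is to handle the $\alpha=0$ case first as the classical Hardy--Littlewood maximal theorem on a space of homogeneous type, and then reduce the $\alpha>0$ case to it by a pointwise Hedberg-type inequality. The key structural facts that make the classical arguments go through verbatim are: the Haar measure is doubling (indeed, $|B_{2\delta}(x_0,t_0)| = 2^{Q} |B_\delta(x_0,t_0)|$), the quasi-triangle inequality for $\rho$ holds, and the balls are left-translates of $B_\delta(0,0)$.

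For $\alpha=0$, the bound $\|M_0 f\|_\infty \leq \|f\|_\infty$ is immediate from the definition of the supremum of averages. For the weak-type $(1,1)$ estimate, I would argue by a standard Vitali-type covering: any compact subset of the level set $\{M_0 f > \lambda\}$ is covered by $\rho$-balls $B$ with $\int_B |f|\,dy\,ds > \lambda |B|$, from which one extracts a disjoint subfamily $\{B_j\}$ inflated versions of which still cover, so $|\{M_0 f>\lambda\}| \lesssim \sum |B_j| \leq \lambda^{-1} \sum \int_{B_j}|f| \leq \lambda^{-1}\|f\|_1$. Marcinkiewicz interpolation between the endpoints then yields the $L^{p_0}\to L^{p_0}$ bound for every $1<p_0\leq \infty$.

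For $0<\alpha<Q$ and $1<p_0<Q/\alpha$ with $1/q_0=1/p_0-\alpha/Q$, I would first establish the pointwise estimate
\[
M_\alpha f(x,t) \lesssim \|f\|_{L^{p_0}(\mathbb{H}^{n})}^{\,1-p_0/q_0}\, \bigl(Mf(x,t)\bigr)^{p_0/q_0}.
\]
To see this, fix any $\rho$-ball $B$ of radius $r$ containing $(x,t)$; using $|B|=c_0 r^{Q}$ one has the two estimates
\[
|B|^{\frac{\alpha}{Q}-1}\!\int_B |f|\,dy\,ds \lesssim r^{\alpha}\, Mf(x,t), \qquad |B|^{\frac{\alpha}{Q}-1}\!\int_B |f|\,dy\,ds \lesssim r^{\alpha - Q/p_0}\, \|f\|_{L^{p_0}},
\]
the first by $Mf$'s definition and the second by H\"older. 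Taking the supremum over $B$ and optimizing by choosing $r$ so the two bounds match (i.e.\ $r^{Q/p_0}\approx \|f\|_{p_0}/Mf(x,t)$) delivers the pointwise inequality, the exponent $p_0/q_0 = 1-\alpha p_0/Q$ falling out of the bookkeeping. Raising to the $q_0$-power, integrating, and applying the already proven strong $(p_0,p_0)$ bound for $M=M_0$ gives the desired $L^{p_0}\to L^{q_0}$ estimate.

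The only step where I anticipate having to be careful is the Vitali covering argument in the unbounded Heisenberg setting, because the supremum in the definition of $M_0$ is taken over \emph{all} balls containing $(x,t)$ with no a priori radius bound. I would handle this by first bounding a truncated version (supremum over balls of radius at most $R$) whose level sets are bounded, extracting a finite Vitali subfamily there, and then letting $R\to\infty$ via monotone convergence. Everything else is a routine adaptation of the Euclidean proofs, since the only inputs are doubling of $|\cdot|$, the quasi-triangle inequality for $\rho$, and H\"older's inequality.
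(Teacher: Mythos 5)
Your proof is correct, and it is genuinely more self-contained than the paper's, which disposes of the proposition in two citations: the case $\alpha=0$ is referred to Stein's book (the Hardy--Littlewood maximal theorem on $\mathbb{H}^{n}$, proved there exactly by the Vitali covering and interpolation argument you sketch), and the case $0<\alpha<Q$ is referred to a weighted norm inequality for $M_\alpha$ from the author's earlier preprint, specialized to the weight $\omega\equiv 1$. Your replacement of the latter citation by the pointwise domination
\[
M_\alpha f(x,t) \lesssim \|f\|_{L^{p_0}}^{\,1-p_0/q_0}\,\bigl(M_0 f(x,t)\bigr)^{p_0/q_0}
\]
is the standard unweighted route and the exponent bookkeeping checks out: the two competing bounds $r^{\alpha}M_0f(x,t)$ and $r^{\alpha-Q/p_0}\|f\|_{p_0}$ are respectively increasing and decreasing in $r$ (the latter because $p_0<Q/\alpha$), so every ball's average is dominated by their common value at the matching radius, and $1-\alpha p_0/Q=p_0/q_0$ as claimed. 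What your approach buys is transparency and independence from the weighted machinery; what the paper's citation buys is brevity and a more general (weighted) statement that the author already had on hand. Two minor points worth a sentence each in a written-up version: dispose of the degenerate cases $M_0f(x,t)=0$ and $M_0f(x,t)=\infty$ before optimizing over $r$, and note that on $\mathbb{H}^{n}$ with the Koranyi norm you in fact have the genuine triangle inequality (listed among the properties of $\rho$ in Section 2), so the Vitali dilation constant is the classical one.
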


\begin{proof}
The case $\alpha = 0$ follows from \cite[Note 2.5 in p. 11]{Stein} and \cite[Theorem 1 in p. 13]{Stein}. Now, the case $0 < \alpha < Q$ follows from
\cite[Proposition 4.3]{Pablo} applied with $\omega \equiv 1$.
\end{proof}

The following result talks about the $L^{p_0}(\mathbb{H}^{n}) \to L^{q_0}(\mathbb{H}^{n})$ boundedness of the operator $T_{\alpha, \, m}$.

\begin{theorem} \label{Hp0-Lq0}
If $0 \leq \alpha < Q$, $m \in \mathbb{N} \cap (1 - \frac{\alpha}{Q}, \infty)$, and $T_{\alpha, \, m}$ is the operator defined by (\ref{Tm}), then 
$T_{\alpha, \, m}$ is bounded from $L^{p_0}(\mathbb{H}^{n})$ into $L^{q_0}(\mathbb{H}^{n})$ for $1 < p_0 < Q/\alpha$ and 
$1/q_0 = 1/p_0 - \alpha/Q$.
\end{theorem}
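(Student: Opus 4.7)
My plan is to reduce $T_{\alpha, m}$ to operators whose $L^{p_0} \to L^{q_0}$ boundedness is already known, via a pointwise partition of the integration domain based on which of the $m$ factors of the kernel is smallest. Writing $\rho_j(x, t; y, s) := \rho((A_j y, r_j^{-2} s)^{-1} \cdot (x, t))$, I would set, for each fixed $(x, t) \in \mathbb{H}^n$,
\[
\Omega_k(x, t) := \{ (y, s) \in \mathbb{H}^n : \rho_k \leq \rho_j \text{ for every } j = 1, \ldots, m \}
\]
(with ties broken arbitrarily), so that $\mathbb{H}^n = \bigcup_{k=1}^m \Omega_k(x, t)$. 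On $\Omega_k(x, t)$, because $\rho_j \geq \rho_k$ for all $j$ and $\sum_j \alpha_j = Q - \alpha$,
\[
\prod_{j=1}^m \rho_j^{-\alpha_j} \leq \rho_k^{-(Q - \alpha)}.
\]
This yields the pointwise bound $|T_{\alpha, m} f(x, t)| \leq \sum_{k=1}^m S_k |f|(x, t)$, where $S_k g(x, t) := \int_{\Omega_k(x, t)} g(y, s) \rho_k^{-(Q - \alpha)} \, dy \, ds$, and it suffices to bound each $S_k$ from $L^{p_0}$ into $L^{q_0}$.

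In the case $0 < \alpha < Q$, one has $r_k = 1$ and $A_k \in Sp(2n, \mathbb{R}) \cap SO(2n)$, so $A_k^t = A_k^{-1}$ together with $A_k^t J A_k = J$ gives $A_k J = J A_k$. A direct computation with the group law, combined with the invariance of $\rho$ under the action of $Sp(2n, \mathbb{R}) \cap SO(2n)$, then yields
\[
\rho((A_k y, s)^{-1} \cdot (x, t)) = \rho((y, s)^{-1} \cdot (A_k^{-1} x, t)).
\]
Enlarging the domain of integration in $S_k$ from $\Omega_k(x, t)$ to all of $\mathbb{H}^n$ (an upper bound) identifies the resulting integral as $\mathcal{R}_\alpha |f|(A_k^{-1} x, t)$. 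Since $(x, t) \mapsto (A_k^{-1} x, t)$ is measure-preserving, the Folland--Stein $L^{p_0} \to L^{q_0}$ bound for $\mathcal{R}_\alpha$ (\cite[Proposition 6.2]{Folland}) gives $\|S_k f\|_{L^{q_0}} \lesssim \|f\|_{L^{p_0}}$, and summing over $k$ closes this case.

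The hard part will be the case $\alpha = 0$, where $\rho_k^{-Q}$ is not locally integrable and the reduction to the Riesz potential fails. Here $A_k = r_k^{-1} I$ and the distinct points $z_k := r_k \cdot (x, t)$ are the $m$ singular points of the kernel; using $x^t J x = 0$ one computes $z_j^{-1} \cdot z_k = ((r_k - r_j) x, (r_k^2 - r_j^2) t)$, whence $\rho(z_j^{-1} \cdot z_k) \approx \rho(x, t)$ with constants depending on the $r_j$'s. My plan would be to refine the partition by splitting $\Omega_k(x, t)$ further into a ball about $z_k$ of radius $c \, \rho(x, t)$ and its complement: on the ball only the $k$-th factor of the kernel is singular (the others being bounded by constants times $\rho(x, t)^{-\alpha_j}$), and a dyadic decomposition identifies the contribution with $M |f|(z_k)$; off the ball the asymptotic $\prod_j \rho_j^{-\alpha_j} \lesssim \rho(y, s)^{-Q}$ controls the far-field via another dyadic decomposition, dominated by $M |f|(x, t)$. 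The $L^{p_0}$ boundedness of $M$ from Proposition \ref{fract max op} then closes the argument.
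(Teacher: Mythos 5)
Your treatment of the case $0 < \alpha < Q$ is correct and is essentially the paper's argument: bounding the product by $\rho_k^{\alpha-Q}$ on the region where the $k$-th factor is smallest, summing over $k$, and using the identity $\rho((A_k y,s)^{-1}\cdot(x,t)) = \rho((y,s)^{-1}\cdot(A_k^{-1}x,t))$ (which follows from the fact that $(x,t)\mapsto(A_kx,t)$ is a $\rho$-preserving automorphism of $\mathbb{H}^n$) to reduce to $\mathcal{R}_\alpha|f|(A_k^{-1}x,t)$.

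The case $\alpha=0$, however, has a genuine gap in the far-field term. You claim that on the region $\rho(y,s) \gg \rho(x,t)$ the bound $\prod_j \rho_j^{-\alpha_j} \lesssim \rho(y,s)^{-Q}$ lets a dyadic decomposition dominate the contribution by $M|f|(x,t)$. It does not: the kernel decay $\rho(y,s)^{-Q}$ exactly matches the volume growth $|B_R|\approx R^Q$, so each dyadic shell $\{2^kR < \rho(y,s)\le 2^{k+1}R\}$ contributes a quantity comparable to $M|f|(x,t)$ with \emph{no geometric decay in $k$}, and the sum over $k$ diverges. Concretely, taking $f=\chi_{B_{2^NR}(0,0)}$ gives
\[
\int_{\rho(y,s)>R} |f(y,s)|\,\rho(y,s)^{-Q}\,dy\,ds \approx N ,
\]
while $M f \le 1$ everywhere; so no pointwise maximal-function bound is possible for this piece. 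This is precisely the obstruction the paper's proof is organized around: for the far-field region $\Omega_{m+2}$ it applies H\"older's inequality together with polar coordinates on $\mathbb{H}^n$ to obtain $|T_{0,m}(\chi_{\Omega_{m+2}}f)(x,t)| \lesssim \|f\|_{L^{p_0}}\,\rho(x,t)^{-Q/p_0}$, which yields only a \emph{weak} type $(p_0,p_0)$ estimate for that piece; combining with the maximal-function bounds for the near-singular and intermediate regions gives weak type $(p_0,p_0)$ for every $1\le p_0<\infty$, and the strong $L^{p_0}\to L^{p_0}$ bound then follows from Marcinkiewicz interpolation. Your near-singular analysis (splitting around the $m$ singular points $z_k = r_k\cdot(x,t)$, using $x^tJx=0$ to get $\rho(z_j^{-1}\cdot z_k)\gtrsim\rho(x,t)$) is sound and matches the paper; it is only the tail that needs the H\"older-plus-interpolation detour rather than a maximal-function bound.
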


\begin{proof}
We study the cases $0 < \alpha < Q$ and $\alpha = 0$ separately. For $0 < \alpha < Q$, we have that $A_j \in Sp(2n, \mathbb{R}) \cap SO(2n)$ for every $j=1, ..., m$, and so
\[
|T_{\alpha, \, m}f(x,t)| \leq \sum_{j=1}^{m} \int_{\mathbb{H}^{n}} |f(y,s)| \, \rho\left((A_j y,s)^{-1} \cdot (x, t) \right)^{\alpha - Q} dy \, ds = \sum_{j=1}^{m} \mathcal{R}_{\alpha}|f|(A_j^{-1} x, t),
\]
Then, the $L^{p_0}(\mathbb{H}^{n}) \to L^{q_0}(\mathbb{H}^{n})$ boundedness for $T_{\alpha, \, m}$ in the case $0 < \alpha < Q$ follows from the boundedness of the Riesz potential $\mathcal{R}_{\alpha}$.

Now we study the case $\alpha = 0$. Let $\beta = \min \{ |r_i - r_j|, \, |r_i^{2} - r_j^{2}|^{1/2} : i \neq j \}$ and 
$\gamma = \max \{ |r_i| : i =1, 2, ..., m \}$. For $(x,t) \neq (0,0)$ and $1 \leq j \leq m$ we define the following sets
\[
\Omega_j = \left\{ (y,s) \in \mathbb{H}^{n} : \rho \left( (y,s)^{-1} \cdot (r_j x, r_j^{2} t) \right) < \frac{\beta}{2} \rho(x,t) \right\};
\]
we also define
\[
\Omega_{m+1} = \left( \bigcap_{j=1}^{m} \Omega_{j}^{c} \right) \cap \left\{ (y,s) \in \mathbb{H}^{n} : \rho(y,s) \leq (1 + \gamma)
\rho(x,t) \right\}
\]
and
\[
\Omega_{m+2} = \left( \bigcap_{j=1}^{m} \Omega_{j}^{c} \right) \cap \left\{ (y,s) \in \mathbb{H}^{n} : \rho(y,s) > (1 + \gamma)
\rho(x,t) \right\}
\]
Then, for $(x,t) \neq (0,0)$ and $C = r_1^{-\alpha_1} \cdot \cdot \cdot r_{m}^{-\alpha_m}$
\[
T_{0, \, m} f(x,t) = C \sum_{j=1}^{m+2} \int_{\Omega_j}  f(y,s) \prod_{k=1}^{m} \rho\left((y,s)^{-1} \cdot (r_k x, r_{k}^{2}t)\right)^{-\alpha_k} \, dy \, ds.
\]
Next, we fix $(x,t) \neq (0,0)$ and $1 \leq j \leq m$. For $(y,s) \in \Omega_j$ and $i \neq j$ we have that $\rho \left( (y,s)^{-1} \cdot 
(r_j x, r_j^{2} t) \right) < \frac{\beta}{2} \rho(x,t)$ and $\rho \left( (r_i x, r_i^{2} t)^{-1} \cdot (r_j x, r_j^{2} t) \right) \geq \beta \rho(x,t)$; to obtain this last inequality we use that $x J x^{t} = 0$. So,
\[
\rho \left( (y,s)^{-1} \cdot (r_i x, r_i^{2} t) \right) \geq \rho \left( (r_i x, r_i^{2} t)^{-1} \cdot (r_j x, r_j^{2} t) \right) -
\rho \left( (y,s)^{-1} \cdot (r_j x, r_j^{2} t) \right) \geq \frac{\beta}{2} \rho(x,t)
\]
and
\begin{eqnarray*}
&& C^{-1}|T_{0,m}(\chi_{\Omega_j}f)(x,t)| \leq \int_{\Omega_j} |f(y,s)| \prod_{k=1}^{m} \rho\left((y,s)^{-1} \cdot (r_k x, r_{k}^{2}t)\right)^{-\alpha_k} \, dy \, ds \\
&\leq& \left(\frac{\beta}{2} \rho(x,t) \right)^{\alpha_j - Q} \int_{\Omega_j} \rho\left((y,s)^{-1} \cdot (r_j x, r_{j}^{2}t)\right)^{-\alpha_j} |f(y,s)| \, dy \, ds \\
&\leq& \left(\frac{\beta}{2} \rho(x,t) \right)^{\alpha_j - Q} \sum_{k=0}^{\infty} \int_{\frac{\beta \rho(x,t)}{2^{k+2}} \leq 
\rho \left((y,s)^{-1} \cdot (r_j x, r_{j}^{2}t)\right) < \frac{\beta \rho(x,t)}{2^{k+1}}} \rho\left((y,s)^{-1} \cdot (r_j x, r_{j}^{2}t)\right)^{-\alpha_j}
\end{eqnarray*}
\[
\times \,\, |f(y,s)| \, dy \, ds
\]
\begin{eqnarray*}
&\leq& \left(\frac{\beta}{2} \rho(x,t) \right)^{\alpha_j - Q} \sum_{k=0}^{\infty} \int_{\frac{\beta\rho(x,t)}{2^{k+2}} \leq 
\rho \left((y,s)^{-1} \cdot (r_j x, r_{j}^{2}t)\right) < \frac{\beta \rho(x,t)}{2^{k+1}}} \left( \frac{\beta \rho(x,t)}{2^{k+2}} 
\right)^{-\alpha_j} |f(y,s)| \, dy \, ds 
\end{eqnarray*}
\begin{eqnarray*}
&\leq& 2^{Q} c_0 \sum_{k=0}^{\infty} 2^{-(k+1)(Q-\alpha_j)} c_0^{-1}\left( \frac{\beta \rho(x,t)}{2^{k+1}} \right)^{-Q}
\int_{\rho \left((y,s)^{-1} \cdot (r_j x, r_{j}^{2}t)\right) < \frac{\beta \rho(x,t)}{2^{k+1}}}  |f(y,s)| \, dy \, ds 
\end{eqnarray*}
\begin{eqnarray*}
&\leq& 2^{\alpha_j} c_0 \sum_{k=0}^{\infty} 2^{-k(Q-\alpha_j)} c_0^{-1}\left( \frac{\beta \rho(x,t)}{2^{k+1}} \right)^{-Q}
\int_{\rho \left((y,s)^{-1} \cdot (r_j x, r_{j}^{2}t)\right) < \frac{\beta \rho(x,t)}{2^{k+1}}}  |f(y,s)| \, dy \, ds, 
\end{eqnarray*}
where $c_0 = |B_{1}(0,0)|$. Then
\begin{equation} \label{omegaj}
C^{-1}|T_{0,m}(\chi_{\Omega_j}f)(x,t)| \leq \frac{2^{\alpha_j} c_0}{1- 2^{-(Q-\alpha_j)}} M_0 f(r_j x, r_j^{2} t), \,\,\,\, \text{for every} \,\, 
j=1, ...,m.
\end{equation}

On the region $\Omega_{m+1}$ we have
\begin{eqnarray*}
C^{-1}|T_{0, m}(\chi_{\Omega_{m+1}}f)(x,t)| &\leq& \int_{\Omega_{m+1}} |f(y,s)| \prod_{k=1}^{m} \rho\left((y,s)^{-1} \cdot (r_k x, r_{k}^{2}t)\right)^{-\alpha_k} \, dy \, ds \\
&\leq& \left( \frac{2}{\beta} \right)^{Q} \rho(x,t)^{-Q} \int_{\rho(y,s) \leq (1+\gamma) \rho(x,t)} |f(y,s)| \, dy \, ds \\
&\leq& \left( \frac{2}{\beta} \right)^{Q} \rho(x,t)^{-Q} \int_{\rho((x,t)^{-1} \cdot (y,s)) < 2(1+\gamma) \rho(x,t)} |f(y,s)| \, dy \, ds,
\end{eqnarray*}
so
\begin{equation} \label{omegam1}
C^{-1}|T_{0, m}(\chi_{\Omega_{m+1}}f)(x,t)| \leq \left( \frac{4 (1+\gamma)}{\beta} \right)^{Q} c \, M_0 f(x,t).
\end{equation}

Now, if $(y,s) \in \Omega_{m+2}$, for every $1 \leq i \leq m$, we have that $\rho((y,s)^{-1} \cdot (r_i x, r_i^{2} t)) \geq 
\frac{1}{1+\gamma} \rho(y,s)$. Thus
\[
C^{-1}|T_{0,m} (\chi_{\Omega_{m+2}} f)(x,t)| \leq \int_{\Omega_{m+2}} |f(y,s)| \prod_{k=1}^{m} \rho\left((y,s)^{-1} \cdot (r_k x, r_{k}^{2}t)\right)^{-\alpha_k} \, dy \, ds
\]
\[
\leq (1+\gamma)^{Q} \int_{\rho(y,s) > (1+\gamma) \rho(x,t)} |f(y,s)| \rho(y,s)^{-Q} \, dy \, ds
\]
\[
\leq (1+\gamma)^{Q/p_0'} \, \frac{\sigma(\{ \rho = 1\})^{1/p_0'}}{((p_0'-1)Q)^{1/p_0'}} \, \| f \|_{L^{p_0}(\mathbb{H}^{n})} \, 
\rho(x,t)^{-Q/p_0},
\]
to obtain this last inequality we apply H\"older's inequality and polar coordinates on $\mathbb{H}^{n}$ (see \cite[p. 211]{Krantz}). Then, for
$\lambda > 0$
\begin{equation} \label{omegam2}
\left| \left\{ (x,t) \in \mathbb{H}^{n} : C^{-1}|T_{0,m} (\chi_{\Omega_{m+2}} f)(x,t)| > \lambda) \right\} \right| \lesssim
\left( \frac{\| f \|_{L^{p_0}(\mathbb{H}^{n})}}{\lambda}  \right)^{p_0}.
\end{equation}
Thus, by (\ref{omegaj}), (\ref{omegam1}), Proposition \ref{fract max op}, and (\ref{omegam2}) we have that $T_{0,m}$ is weak type $(p_0, p_0)$ for every $1 \leq p_0 < \infty$. Finally, the Marcinkiewicz interpolation theorem (see \cite[p. 34]{Grafakos}) allows to conclude the theorem.
\end{proof}

For every $i = 1,2, ..., 2n+1$, $X_i$ denotes the left-invariant vector field on $\mathbb{H}^{n}$, which are given by
\[
X_i = \frac{\partial}{\partial x_i} + \frac{x_{i+n}}{2} \frac{\partial}{\partial t}, \,\,\,\, i=1, 2, ..., n;
\]
\[
X_{i+n} = \frac{\partial}{\partial x_{i+n}} - \frac{x_{i}}{2} \frac{\partial}{\partial t}, \,\,\, i=1, 2, ..., n;
\]
and
\[
X_{2n+1} = \frac{\partial}{\partial t}.
\]

Given a multiindex $I=(i_1,i_2, ..., i_{2n}, i_{2n+1}) \in (\mathbb{N} \cup \{ 0 \})^{2n+1}$, we set
\[
|I| = i_1 + i_2 + \cdot \cdot \cdot + i_{2n} + i_{2n+1}, \hspace{.5cm} d(I) = i_1 + i_2 + \cdot \cdot \cdot + i_{2n} + 2 \, i_{2n+1}.
\]
The amount $|I|$ is called the length of $I$ and $d(I)$ the homogeneous degree of $I$. We adopt the following multiindex notation for
higher order derivatives and for monomials on $\mathbb{H}^{n}$. If $I=(i_1, i_2, ..., i_{2n+1})$ is a multiindex, 
$X = \{ X_i \}_{i=1}^{2n+1}$, and $z = (x,t) = (x_1, ..., x_{2n}, t) \in \mathbb{H}^{n}$, we put
\[
X^{I} := X_{1}^{i_1} X_{2}^{i_2} \cdot \cdot \cdot X_{2n+1}^{i_{2n+1}}, \,\,\,\,\,\, \text{and} \,\,\,\,\,\,
z^{I} := x_{1}^{i_1} \cdot \cdot \cdot x_{2n}^{i_{2n}} \cdot t^{i_{2n+1}}.
\]
A computation give
\[
X^{I}(f(r \cdot z)) = r^{d(I)} (X^{I}f)(r\cdot z), \,\,\,\,\,\, \text{and} \,\,\,\,\,\, (r\cdot z)^{I} = r^{d(I)} z^{I}.
\]
So, the operators $X^{I}$ and the monomials $z^{I}$ are homogeneous of degree $d(I)$.

Next, we establish an estimate for higher order derivatives (in the variable $(x,t)$) of the kernel 
$\prod_{j=1}^{m} \rho \left( (A_j y, r_j^{-2} s)^{-1} \cdot (x, t) \right)^{-\alpha_j}$.

\begin{proposition} \label{Der Kernel}
For every $N \in \mathbb{N}$, \\

$
\displaystyle{\left| X^{I} \left( \prod_{j=1}^{m} \rho \left( (A_j y, r_j^{-2} s)^{-1} \cdot (x, t) \right)^{-\alpha_j} \right) \right|}
$
\[
\leq C 
\left(\prod_{j=1}^{m} \rho \left( (A_j y, r_j^{-2} s)^{-1} \cdot ( x, t) \right)^{-\alpha_j} \right) 
\left( \sum_{j=1}^{m} \rho \left( (A_j y, r_j^{-2} s)^{-1} \cdot ( x, t) \right)^{-1} \right)^{d(I)}
\]
holds for all multiindex $I$ such that $d(I) \leq N$, where $C$ is a positive constant which does not depend on $(y,s)$ and $(x,t)$.
\end{proposition}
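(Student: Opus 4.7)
The plan is to reduce the general case to the single-factor case via the Leibniz rule, then exploit homogeneity of $\rho$ combined with left-invariance of the vector fields $X^{I}$. The base estimate I would first establish is: if $\phi \in C^{\infty}(\mathbb{H}^{n}\setminus\{e\})$ is homogeneous of degree $\lambda$, i.e.\ $\phi(r\cdot z) = r^{\lambda}\phi(z)$ for all $r>0$, then because each $X_i$ is itself homogeneous of degree $1$ for $i \leq 2n$ and $2$ for $i=2n+1$, $X^{I}\phi$ is homogeneous of degree $\lambda - d(I)$ on $\mathbb{H}^{n}\setminus\{e\}$. Continuity of $X^{I}\phi$ on the compact Koranyi sphere $\{\rho = 1\}$ then yields $|X^{I}\phi(z)| \leq C\rho(z)^{\lambda - d(I)}$. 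Taking $\phi = \rho^{-\alpha}$ gives the template inequality $|X^{I}(\rho^{-\alpha})(z)| \leq C\rho(z)^{-\alpha - d(I)}$.

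Since every $X_i$ is left-invariant, so is $X^{I}$; hence for any fixed $(a,b) \in \mathbb{H}^{n}$, writing $F(x,t) := \rho((a,b)^{-1}\cdot(x,t))^{-\alpha}$, one has $X^{I}F(x,t) = (X^{I}\rho^{-\alpha})((a,b)^{-1}\cdot(x,t))$, and consequently $|X^{I}F(x,t)| \leq C\rho((a,b)^{-1}\cdot(x,t))^{-\alpha - d(I)}$. To handle the product of $m$ factors, I would iterate the Leibniz rule across $X^{I}$. Because each individual $X_i$ is a first-order derivation, distribution of $X^{I}$ over the product $\prod_{j}\rho_j^{-\alpha_j}$, with $\rho_j := \rho((A_j y, r_j^{-2}s)^{-1}\cdot(x,t))$, produces a finite sum of terms of the form $\prod_{j=1}^{m} D_{j}(\rho_j^{-\alpha_j})$, where each $D_j$ is a word in the $X_i$'s determined by the distribution process (no commutators arise, since at each step only the Leibniz rule for a single derivation is invoked), and the homogeneous degrees satisfy $\sum_{j=1}^{m}d(D_j) = d(I)$.

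Combining the two ingredients, each term in the expansion is bounded by $C\prod_{j}\rho_j^{-\alpha_j - d(D_j)} = C\bigl(\prod_{j}\rho_j^{-\alpha_j}\bigr)\prod_{j}\rho_j^{-d(D_j)}$. To finish, I would observe that the multinomial expansion of $\bigl(\sum_{j=1}^{m}\rho_j^{-1}\bigr)^{d(I)}$ contains every monomial $\prod_j \rho_j^{-n_j}$ with $\sum_j n_j = d(I)$ with a positive coefficient, so each factor $\prod_j \rho_j^{-d(D_j)}$ is dominated by a constant multiple of $\bigl(\sum_j \rho_j^{-1}\bigr)^{d(I)}$; summing the finitely many terms in the Leibniz expansion yields the claimed inequality for each fixed $I$, and taking the maximum over the finite set of multiindices with $d(I) \leq N$ produces a single constant valid uniformly. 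The only delicate point is tracking that the Leibniz expansion preserves total homogeneous degree despite the non-commutativity of the $X_i$'s; this follows from the fact that composition of operators homogeneous of degrees $a$ and $b$ is homogeneous of degree $a+b$ regardless of order, so each $D_j$, being a word formed from a subsequence of the vector fields appearing in $X^{I}$, has $d(D_j)$ equal to the sum of the individual degrees of that subsequence.
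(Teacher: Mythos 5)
Your argument is correct and follows essentially the same route as the paper: the single-factor bound $|X^{I}(\rho^{-\alpha_j})((A_jy,r_j^{-2}s)^{-1}\cdot(x,t))| \leq C\,\rho((A_jy,r_j^{-2}s)^{-1}\cdot(x,t))^{-\alpha_j-d(I)}$ obtained from homogeneity of $X^{I}$ and $\rho^{-\alpha_j}$ together with left-invariance, followed by the Leibniz rule and the observation that each resulting monomial $\prod_j\rho_j^{-d(D_j)}$ with $\sum_j d(D_j)=d(I)$ is dominated by $(\sum_j\rho_j^{-1})^{d(I)}$. The only cosmetic difference is that you carry out the full $m$-fold Leibniz expansion directly (with a careful justification that total homogeneous degree is preserved despite non-commutativity), whereas the paper treats $m=2$ and then inducts on $m$; the substance is identical.
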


\begin{proof} We fix $N \in \mathbb{N}$. From the left-invariance and the homogeneity of $X^{I}$ and the homogeneity of the function 
$\rho(\cdot)^{-\alpha_j}$ we obtain that
\begin{equation} \label{estim XI}
\left| X^{I} \left(\rho \left( (A_j y, r_j^{-2} s)^{-1} \cdot (x, t) \right)^{-\alpha_j} \right) \right| \leq C_{I, \, j} \, 
\rho \left( (A_j y, r_j^{-2} s)^{-1} \cdot (x, t) \right)^{-\alpha_j - d(I)},
\end{equation}
for every  multiindex $I$ such that $d(I) \leq N$ and every $1 \leq j \leq m$. For $m=2$, (\ref{estim XI}) leads to \\

$
\left|X^{I} \left( \displaystyle{\prod_{j=1}^{2}} \rho \left((A_j y, r_j^{-2} s)^{-1} \cdot (x, t) \right)^{-\alpha_j} \right) \right|
$
\[
\leq C 
\left(\prod_{j=1}^{2} \rho \left( (A_j y,r_j^{-2} s)^{-1} \cdot (x, t) \right)^{-\alpha_j} \right) 
\left( \sum_{j=1}^{2} \rho \left( (A_j y, r_j^{-2} s)^{-1} \cdot (x, t) \right)^{-1} \right)^{d(I)},
\]
for every multiindex $I$ such that $d(I) \leq N$. Finally, applying induction on $m$, the proposition follows.
\end{proof}

\section{Basics on variable Lebesgue and Hardy spaces}

Let $p(\cdot) : \mathbb{H}^{n} \to (0, \infty)$ be a measurable function. Given a measurable set $E \subset \mathbb{H}^{n}$, let
\[
p_{-}(E) = \essinf_{ (x,t) \in E } p(x,t), \,\,\,\, \text{and} \,\,\,\, p_{+}(E) = \esssup_{(x,t) \in E} p(x,t).
\]
When $E = \mathbb{H}^{n}$, we will simply write $p_{-} := p_{-}(\mathbb{H}^{n})$, $p_{+} := p_{+}(\mathbb{H}^{n})$ and 
$\underline{p} := \min \{ p_{-}, 1 \}$. Such function $p(\cdot)$ is called an exponent function.

We say that an exponent function $p(\cdot) : \mathbb{H}^{n} \to (0, \infty)$ such that $0 < p_{-} \leq p_{+} < \infty$ belongs 
to $\mathcal{P}^{\log}(\mathbb{H}^{n})$, if there exist positive constants $C$, $C_{\infty}$ and $p_{\infty}$ such that $p(\cdot)$ satisfies the local log-H\"older continuity condition, i.e.:
\begin{equation} \label{log 1}
|p(x,t) - p(y,s)| \leq \frac{C}{-\log(\rho((x,t)^{-1} \cdot (y,s))}, \,\,\, \text{for} \,\, \rho((x,t)^{-1} \cdot (y,s)) \leq \frac{1}{2},
\end{equation}
and is log-H\"older continuous at infinity, i.e.:
\begin{equation} \label{log 2}
|p(x,t) - p_{\infty}| \leq \frac{C_{\infty}}{\log(e+\rho(x,t))}, \,\,\, \text{for all} \,\, (x,t) \in \mathbb{H}^{n}.
\end{equation}
Here $\rho$ is the {\it Koranyi norm} given by (\ref{Koranyi norm}).

\

Given an exponent function $p(\cdot)$ on $\mathbb{H}^{n}$, we define the variable Lebesgue space $L^{p(\cdot)} = L^{p(\cdot)}(\mathbb{H}^{n})$ to be the set of all measurable functions 
$f: \mathbb{H}^{n} \to \mathbb{C}$ such that for some $\lambda > 0$ 
\[
\int_{\mathbb{H}^{n}} |f(x,t)/\lambda|^{p(x,t)} dx \, dt < \infty.
\]
This becomes a quasi normed space when equipped with the Luxemburg norm
\[
\| f \|_{L^{p(\cdot)}} = \inf \left\{ \lambda > 0 : \int_{\mathbb{H}^{n}} |f(x,t)/\lambda|^{p(x,t)} dx \, dt \leq 1 \right\}.
\]
The following result follows from the definition of the $L^{p(\cdot)}$ - norm.

\begin{lemma} \label{potencia s}
Given a measurable function $p(\cdot) : \mathbb{H}^{n} \to (0, \infty)$ with $0 < p_{-} \leq p_{+} < \infty$, then 
\\
(i) $\| f \|_{L^{p(\cdot)}} \geq 0$ and $\| f \|_{L^{p(\cdot)}} = 0$ if and only if $f \equiv 0$ a.e.,
\\
(ii) $\| c f \|_{L^{p(\cdot)}} = |c| \| f \|_{L^{p(\cdot)}}$ for all $f \in L^{p(\cdot)}$ and all $c \in \mathbb{C}$,
\\
(iii) $\| f +  g \|_{L^{p(\cdot)}} \leq 2^{1/\underline{p} - 1}(\| f \|_{L^{p(\cdot)}} + \| g \|_{L^{p(\cdot)}})$ for all 
$f, g \in L^{p(\cdot)}$,
\\
(iv) $\| f \|_{L^{p(\cdot)}}^{s} = \| |f|^{s} \|_{L^{p(\cdot)/s}}$ for every $s > 0$.
\end{lemma}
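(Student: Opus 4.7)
My plan is to handle the four items in order of difficulty: (i), (ii), and (iv) are essentially direct manipulations of the Luxemburg infimum, while (iii) is the substantive one and demands a unified argument that treats the exponents $p(x,t) \geq 1$ and $p(x,t) < 1$ simultaneously.

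For (i), the implication $f \equiv 0 \Rightarrow \|f\|_{L^{p(\cdot)}} = 0$ is immediate since the modular $\int |f/\lambda|^{p(x,t)}\, dx\, dt$ vanishes for every $\lambda > 0$. For the converse, I would take a sequence $\lambda_n \searrow 0$ realizing the infimum, note that $|f(x,t)|/\lambda_n \to \infty$ on $\{|f|>0\}$, and invoke Fatou (using $p_- > 0$) to force $|\{|f|>0\}| = 0$. For (ii), with $c \neq 0$ the substitution $\mu = \lambda/|c|$ in the defining infimum yields $\|cf\| = |c|\|f\|$; the case $c = 0$ is trivial. For (iv), given $\lambda > \|f\|_{L^{p(\cdot)}}$ with modular at most $1$, I rewrite $\int |f/\lambda|^{p(x,t)} = \int (|f|^s/\lambda^s)^{p(x,t)/s}$ to obtain $\||f|^s\|_{L^{p(\cdot)/s}} \leq \lambda^s$, and symmetrically in the other direction; sending $\lambda \searrow \|f\|$ gives the equality.

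The heart of the lemma is (iii). Set $r := \underline{p} = \min(p_-, 1)$; I first aim at the stronger $r$-subadditivity
\[
\|f+g\|_{L^{p(\cdot)}}^{r} \leq \|f\|_{L^{p(\cdot)}}^{r} + \|g\|_{L^{p(\cdot)}}^{r}.
\]
Fix $\lambda_1 > \|f\|_{L^{p(\cdot)}}$, $\lambda_2 > \|g\|_{L^{p(\cdot)}}$ with modular at most $1$, let $\lambda := (\lambda_1^r + \lambda_2^r)^{1/r}$ and $\alpha_i := \lambda_i/\lambda$, so $\alpha_1^r + \alpha_2^r = 1$ and, since $r \leq 1$ with $\alpha_i \in [0,1]$, also $\alpha_1 + \alpha_2 \leq 1$. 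The key pointwise bound I would establish is
\[
\left|\frac{(f+g)(x,t)}{\lambda}\right|^{p(x,t)} \leq \alpha_1^{r}\left|\frac{f(x,t)}{\lambda_1}\right|^{p(x,t)} + \alpha_2^{r}\left|\frac{g(x,t)}{\lambda_2}\right|^{p(x,t)}.
\]
When $p(x,t) \geq 1$ this comes from convexity of $u \mapsto u^{p(x,t)}$ combined with $\alpha_1 + \alpha_2 \leq 1$ and $\alpha_i \leq \alpha_i^{r}$. When $p(x,t) < 1$ (only possible if $r = p_- < 1$) I use the elementary $(a+b)^{p(x,t)} \leq a^{p(x,t)} + b^{p(x,t)}$ followed by $\alpha_i^{p(x,t)} \leq \alpha_i^{r}$. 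Integrating yields modular $\leq \alpha_1^r + \alpha_2^r = 1$ at scale $\lambda$, hence $\|f+g\| \leq \lambda$; letting $\lambda_i \searrow \|f_i\|_{L^{p(\cdot)}}$ gives the $r$-subadditivity. The stated form then follows from the power-mean inequality $(a^r + b^r)^{1/r} \leq 2^{1/r-1}(a+b)$ for $0 < r \leq 1$, a direct consequence of concavity of $t \mapsto t^r$.

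The main obstacle is precisely this pointwise estimate: the exponent $p(x,t)$ may straddle both the convex regime $(\geq 1)$ and the concave/subadditive regime $(<1)$ on a single $\mathbb{H}^n$, so a case analysis at each point is unavoidable, and one needs a single right-hand side that both cases land on. Introducing $\underline{p}$ is the device that merges them, and it specializes correctly: when $p_- \geq 1$ one has $r = 1$ and the constant $2^{1/r-1}$ collapses to $1$, recovering the ordinary Minkowski inequality; when $p_- < 1$ the constant $2^{1/\underline{p}-1}$ is sharp, as is easily seen by testing with $f$ and $g$ supported on disjoint sets of equal contribution.
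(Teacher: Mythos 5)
Your proof is correct. The paper offers no argument for this lemma at all --- it is introduced with the single sentence that it ``follows from the definition of the $L^{p(\cdot)}$-norm'' --- so there is nothing to compare routes against; what you have written is the standard verification that the omitted details require. In particular, the one substantive step, the pointwise bound $\left|\tfrac{f+g}{\lambda}\right|^{p(x,t)} \leq \alpha_1^{r}\left|\tfrac{f}{\lambda_1}\right|^{p(x,t)} + \alpha_2^{r}\left|\tfrac{g}{\lambda_2}\right|^{p(x,t)}$ with $r=\underline{p}$ and $\alpha_1^{r}+\alpha_2^{r}=1$, is handled correctly in both regimes (convexity plus $\alpha_i\leq\alpha_i^{r}$ when $p(x,t)\geq 1$; subadditivity plus $\alpha_i^{p(x,t)}\leq\alpha_i^{r}$, valid a.e.\ since $p(x,t)\geq p_-\geq r$, when $p(x,t)<1$), and the passage from $r$-subadditivity of the norm to the stated constant $2^{1/\underline{p}-1}$ via the power-mean inequality is exactly right.
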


Next we shall define the variable Hardy spaces on $\mathbb{H}^{n}$. For them, we introduce the Schwartz space $\mathcal{S}(\mathbb{H}^{n})$ which is defined by
\[
\mathcal{S}(\mathbb{H}^{n}) = \left\{ \phi \in C^{\infty}(\mathbb{H}^{n}) : \sup_{(x,t) \in \mathbb{H}^{n}} (1+\rho(x,t))^{N} 
|(X^{I}f)(x,t)| < \infty \,\,\, \forall \,\, N \in \mathbb{N}_{0}, \, I \in (\mathbb{N}_{0})^{2n+1}  \right\}.
\]
We topologize the space $\mathcal{S}(\mathbb{H}^{n})$ with the following family of seminorms
\[
\| f \|_{\mathcal{S}(\mathbb{H}^{n}), \, N} = \sum_{d(I) \leq N} \sup_{(x,t) \in \mathbb{H}^{n}} (1+\rho(x,t))^{N} |(X^{I}f)(x,t)| \,\,\,\,\,\,\, (N \in \mathbb{N}_{0}),
\]
with $\mathcal{S}'(\mathbb{H}^{n})$ we denote the dual space of $\mathcal{S}(\mathbb{H}^{n})$. 

Given $N \in \mathbb{N}$, define 
\[
\mathcal{F}_{N}=\left\{ \varphi \in \mathcal{S}(\mathbb{H}^{n}):\sum\limits_{d(I) \leq N}
\sup\limits_{(x,t) \in \mathbb{H}^{n}}\left( 1 + \rho(x,t) \right)^{N} | (X^{I} \varphi) (x,t) | \leq 1\right\}.
\]
For any $f \in \mathcal{S}'(\mathbb{H}^{n})$, the grand maximal function of $f$ is given by 
\[
\mathcal{M}_N f(x,t)=\sup\limits_{s>0}\sup\limits_{\varphi \in \mathcal{F}_{N}}\left\vert \left( f \ast \varphi_s \right)(x,t) \right\vert,
\]
where $\varphi_s(x,t) = s^{-2n-2} \varphi(s^{-1} \cdot (x, t))$. 

\begin{definition} \label{Dp def} Given an exponent function $p(\cdot):\mathbb{H}^{n} \to ( 0,\infty)$ with $0 < p_{-} \leq p_{+} < \infty$, we define the integer $\mathcal{D}_{p(\cdot)}$ by
\[
\mathcal{D}_{p(\cdot)} := \min \{ k \in \mathbb{N} \cup \{0\} : (2n+k+3) p_{-} > 2n+2 \}.
\]
For $N \geq \mathcal{D}_{p(\cdot)} + 1$, define 
the variable Hardy space $H^{p(\cdot)}(\mathbb{H}^{n})$ to be the collection of $f \in \mathcal{S}'(\mathbb{H}^{n})$ such that
$\| \mathcal{M}_N f \|_{L^{p(\cdot)}(\mathbb{H}^{n})} < \infty$. Then, the "norm" on the space $H^{p(\cdot)}(\mathbb{H}^{n})$ is taken to be
$\| f \|_{H^{p(\cdot)}} := \| \mathcal{M}_Nf \|_{L^{p(\cdot)}}$.
\end{definition}

\begin{definition} \label{atom def} Let $p(\cdot):\mathbb{H}^{n} \to ( 0,\infty)$, $0 < p_{-} \leq p_{+} < \infty $, and $p_{0} > 1$. 
Fix an integer $D \geq \mathcal{D}_{p(\cdot)}$. A measurable function $a(\cdot)$ on $\mathbb{H}^{n}$ is called a $(p(\cdot), p_{0}, D)$ - atom centered at a $\rho$ - ball $B=B_{\delta}(x_0, t_0)$ if it satisfies the following conditions:

$a_{1})$ $\supp ( a ) \subset B$,

$a_{2})$ $\| a \|_{L^{p_{0}}(\mathbb{H}^{n})} \leq 
\displaystyle{\frac{| B |^{\frac{1}{p_{0}}}}{\| \chi _{B} \|_{L^{p(\cdot)}(\mathbb{H}^{n})}}}$,

$a_{3})$ $\displaystyle{\int_{\mathbb{H}^{n}}} a(z) \, z^{I} \, dz = 0$ for all multiindex $I$ such that $d(I) \leq D$ (here $z=(x,t)$).
\end{definition}

Indeed, every $(p(\cdot), p_{0}, D)$ - atom $a(\cdot)$ belongs to $H^{p(\cdot)}(\mathbb{H}^{n})$. Moreover, there exists an universal constant
$C > 0$ such that $\| a \|_{H^{p(\cdot)}} \leq C$ for all $(p(\cdot), p_{0}, D)$ - atom $a(\cdot)$.

\begin{remark} \label{atomo trasladado}
It is easy to check that if $a(\cdot)$ is a $(p(\cdot), p_{0}, D)$ - atom centered at the ball $B_{\delta}(x_0, t_0)$, then the function 
$a_{(x_0, t_0)}(\cdot) := a((x_0, t_0) \cdot (\cdot))$ is a $(p(\cdot), p_{0}, D)$ - atom centered at the ball $B_{\delta}(0, 0)$.
\end{remark}

\begin{definition} Let $p(\cdot):\mathbb{H}^{n} \to ( 0,\infty)$ be an exponent function such that $0 < p_{-} \leq p_{+} < \infty$.
Given a sequence of nonnegative numbers $\{ \lambda_j \}_{j=1}^{\infty}$ and a family of $\rho$ - balls $\{ B_j \}_{j=1}^{\infty}$, we define
\begin{equation} \label{cantidad A}
\mathcal{A} \left( \{ \lambda_j \}_{j=1}^{\infty}, \{ B_j \}_{j=1}^{\infty}, p(\cdot) \right) := 
\left\| \left\{ \sum_{j=1}^{\infty} \left( \frac{\lambda_j  \chi_{B_j}}{\| \chi_{B_j} \|_{L^{p(\cdot)}}} \right)^{\underline{p}} 
\right\}^{1/\underline{p}} \right\|_{L^{p(\cdot)}}.
\end{equation}
\end{definition}

To prove our main result we will need the following additional estimate.

\begin{lemma} \label{B star}
Let $A \in SO(2n)$, $r >0$, $p(\cdot) : \mathbb{H}^{n} \rightarrow (0, \infty) \in \mathcal{P}^{\log}(\mathbb{H}^{n})$ such that 
$0 < p_{-} \leq p_{+} < \infty$, and let $\{ B_j = B_{\delta_j}(x_j^{0}, t_j^{0}) \}_{j=1}^{\infty}$ be a sequence of $\rho$ - balls 
of $\mathbb{H}^{n}$. 

$(i)$ If $p(A x, t) = p(x,t)$ for all $(x, t) \in \mathbb{H}^{n}$ and $B_j^{\ast} = B_{\gamma \delta_j}(A x_j^{0}, t_j^{0})$ for 
each $j \geq 1$, where $\gamma \geq 1$, then
$$\mathcal{A}\left( \left\{ \lambda_{j}\right\}_{j=1}^{\infty },\left\{ B_{j}^{\ast}\right\} _{j=1}^{\infty }, p(\cdot)\right) \lesssim
\mathcal{A}\left( \left\{ \lambda_{j}\right\}_{j=1}^{\infty },\left\{ B_{j}\right\} _{j=1}^{\infty }, p(\cdot)\right),$$
holds for all sequences of nonnegative numbers $\{ \lambda_j \}_{j=1}^{\infty}$ and $\rho$ - balls $\{ B_j \}_{j=1}^{\infty}$.

$(ii)$ If $p(r x, r^{2}t) = p(x,t)$ for all $(x, t) \in \mathbb{H}^{n}$ and $B_j^{\ast} = B_{\gamma \delta_j}(r x_j^{0}, r^{2} t_j^{0})$ for each $j \geq 1$, where $\gamma \geq 1$, then
$$\mathcal{A}\left( \left\{ \lambda_{j}\right\}_{j=1}^{\infty },\left\{ B_{j}^{\ast}\right\} _{j=1}^{\infty }, p(\cdot)\right) \lesssim
\mathcal{A}\left( \left\{ \lambda_{j}\right\}_{j=1}^{\infty },\left\{ B_{j}\right\} _{j=1}^{\infty }, p(\cdot)\right),$$
holds for all sequences of nonnegative numbers $\{ \lambda_j \}_{j=1}^{\infty}$ and $\rho$ - balls $\{ B_j \}_{j=1}^{\infty}$.

\end{lemma}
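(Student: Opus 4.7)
The plan is to combine an invariance reduction with a standard enlargement estimate for $\mathcal{A}$. For part (i), I would first exploit that $\tau_A(x,t) := (Ax, t)$ is a group automorphism of $\mathbb{H}^{n}$ preserving $\rho$ and Lebesgue measure (since $A \in SO(2n)$, $|\det A| = 1$). Combined with $p \circ \tau_A = p$, a direct change of variables in the modular yields $\|g \circ \tau_A\|_{L^{p(\cdot)}} = \|g\|_{L^{p(\cdot)}}$ for every measurable $g$. Since $\tau_A(B_\delta(x_0, t_0)) = B_\delta(Ax_0, t_0)$, setting $\gamma B_j := B_{\gamma \delta_j}(x_j^0, t_j^0)$ gives $B_j^* = \tau_A(\gamma B_j)$, whence $\|\chi_{B_j^*}\|_{L^{p(\cdot)}} = \|\chi_{\gamma B_j}\|_{L^{p(\cdot)}}$, and applying the norm invariance to the full expression in (\ref{cantidad A}) yields $\mathcal{A}(\{\lambda_j\}, \{B_j^*\}, p(\cdot)) = \mathcal{A}(\{\lambda_j\}, \{\gamma B_j\}, p(\cdot))$. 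The remaining step is the enlargement estimate $\mathcal{A}(\{\lambda_j\}, \{\gamma B_j\}, p(\cdot)) \lesssim \mathcal{A}(\{\lambda_j\}, \{B_j\}, p(\cdot))$ for $\gamma \geq 1$; by Lemma \ref{potencia s}(iv) this is equivalent to a linear bound in $L^{p(\cdot)/\underline{p}}(\mathbb{H}^{n})$ and follows from the pointwise bound $\chi_{\gamma B}(x,t) \lesssim \gamma^{Q/s}[M \chi_B(x,t)]^{1/s}$ (valid for any $s > 0$, from $M\chi_B \gtrsim \gamma^{-Q}$ on $\gamma B$) combined with the Fefferman--Stein vector-valued maximal inequality on $\mathbb{H}^{n}$, whose hypotheses are satisfied since $p(\cdot)/\underline{p} \in \mathcal{P}^{\log}(\mathbb{H}^{n})$. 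A version of this enlargement estimate appears in \cite{Pablo}.

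Part (ii) runs in parallel, with the anisotropic dilation $\delta_r(x,t) := (rx, r^2 t)$ replacing $\tau_A$. The new wrinkle is that $\delta_r$ rescales Lebesgue measure by $r^Q$, so the Luxemburg norm itself is not $\delta_r$-invariant; nevertheless, the normalization built into $\mathcal{A}$ absorbs this factor. Using $p \circ \delta_r = p$ and the standard log-H\"older estimate $\|\chi_{B_\delta(x_0, t_0)}\|_{L^{p(\cdot)}} \approx \delta^{Q/p(x_0, t_0)}$ (for small balls, with the appropriate analogue at infinity), a direct modular computation gives $\|\chi_{\delta_r(B_j)}\|_{L^{p(\cdot)}} \approx r^{Q/p(x_j^0, t_j^0)} \|\chi_{B_j}\|_{L^{p(\cdot)}}$, and the $r^Q$ Jacobian cancels the $r^{Q/p(\cdot)}$ factor coming from the normalization (the exponent $-Q/p(x,t)$ being well-approximated, inside the support of each $\chi_{B_j}$, by $-Q/p(x_j^0, t_j^0)$ thanks to invariance and log-H\"older). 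This yields $\mathcal{A}(\{\lambda_j\}, \{\delta_r(B_j)\}, p(\cdot)) \approx \mathcal{A}(\{\lambda_j\}, \{B_j\}, p(\cdot))$. Writing $B_j^* = \delta_r((\gamma/r) B_j)$ and combining this dilation invariance with a radius-scaling estimate by the factor $\gamma/r$ (which is an enlargement when $\gamma \geq r$ and a contraction otherwise, with a constant depending on $r$ and $\gamma$) completes the reduction.

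The principal obstacle will be the enlargement/scaling estimate itself: once invariance has been used to align centers, the content of the lemma is the control of $\mathcal{A}$ under a change of radius, and this is the step that draws essentially on the log-H\"older continuity of $p(\cdot)$ and the boundedness of the Hardy--Littlewood maximal operator on $L^{p(\cdot)/\underline{p}}(\mathbb{H}^{n})$. The invariance steps are essentially bookkeeping with the change of variables formula.
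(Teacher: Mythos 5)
Your argument is essentially the paper's: reduce by the change of variables ($\tau_A$ or $\delta_r$) together with the invariance of $p(\cdot)$ to the concentric dilates $\gamma B_j$, then invoke the enlargement estimate of Nakai--Sawano (Lemma 4.8 in \cite{Nakai}), whose proof is exactly your pointwise bound $\chi_{\gamma B}\lesssim \gamma^{Q/s}(M\chi_B)^{1/s}$ combined with the Fefferman--Stein vector-valued maximal inequality on $\mathbb{H}^{n}$ (\cite[Theorem 4.2]{Fang}). The only adjustment needed is to take the auxiliary exponent $s$ \emph{strictly} smaller than $\underline{p}$ rather than $s=\underline{p}$ (when $p_{-}\le 1$ the exponent $p(\cdot)/\underline{p}$ has infimum equal to $1$, which is borderline for the maximal inequality); otherwise the proposal is sound, and your treatment of part (ii), which correctly flags and absorbs the non-unimodularity of $\delta_r$ through the normalization in $\mathcal{A}$, is in fact more detailed than the paper's ``the proof of $(ii)$ is similar.''
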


\begin{proof}
We start proving $(i)$. Since $p(Ax, t)=p(x, t)$ for all $(x,t) \in \mathbb{H}^{n}$ and $A \in SO(2n)$, a change of variable gives
$$\mathcal{A}\left( \left\{ \lambda_{j}\right\} _{j=1}^{\infty },\left\{
B_{j}^{\ast}\right\} _{j=1}^{\infty }, p(\cdot)\right) =\left\Vert \left\{
\sum\limits_{j=1}^{\infty }\left( \frac{\lambda_{j}\chi _{B_{j}^{\ast}}}{\left\Vert \chi
_{B_{j}\ast}\right\Vert _{p(\cdot)}}\right) ^{\underline{p}}\right\} ^{\frac{1}{%
\underline{p}}}\right\Vert _{p(\cdot)}$$
$$\approx \left\Vert \left\{
\sum\limits_{j=1}^{\infty }\left( \frac{\lambda_{j}\chi _{\gamma B_{j}}}{\left\Vert \chi
_{\gamma B_{j}}\right\Vert _{p(\cdot)}}\right) ^{\underline{p}}\right\} ^{\frac{1}{
\underline{p}}}\right\Vert _{p(\cdot)} = : \mathcal{A}\left( \left\{ \lambda_{j}\right\} _{j=1}^{\infty },\left\{
\gamma B_{j}\right\} _{j=1}^{\infty }, p( \cdot)\right),$$
where $\gamma B_j$ is the $\rho$ - ball with the same center as $B_j$ but whose radius is expanded by the factor $\gamma \geq 1$.
It is clear that $B_j \subset \gamma B_{j}$ for all $j$. Now, the argument utilized in the proof of Lemma 4.8 in \cite{Nakai} works on 
$\mathbb{H}^{n}$ as well, but taking into account \cite[Theorem 4.2]{Fang} instead of \cite[Lemma 2.4]{Nakai}. So
$$\mathcal{A}\left( \left\{ \lambda_{j}\right\} _{j=1}^{\infty },\left\{
\gamma B_{j} \right\} _{j=1}^{\infty }, p(\cdot)\right) \lesssim \mathcal{A}\left( \left\{ \lambda_{j}\right\} _{j=1}^{\infty },\left\{
B_{j}\right\} _{j=1}^{\infty }, p(\cdot)\right).$$
The proof of $(ii)$ is similar. 
\end{proof}

To get our main result we also need the following version of the atomic decomposition for $H^{p(\cdot)}(\mathbb{H}^{n})$ obtained 
in \cite{Fang} (see also \cite[Theorem 5.5]{Pablo}).

\begin{theorem} \label{atomic decomp}
Let $1 < p_0 < \infty$, $p(\cdot) \in \mathcal{P}^{\log}(\mathbb{H}^{n})$ with $0 < p_{-} \leq p_{+} < \infty$. Then, for every $f \in H^{p(\cdot)}(\mathbb{H}^{n}) \cap L^{p_0}(\mathbb{H}^{n})$ and
every integer $D \geq \mathcal{D}_{p(\cdot)}$ fixed, there exist a sequence of nonnegative numbers 
$\{ \lambda_j \}_{j=1}^{\infty}$, a sequence of $\rho$ - balls $\{ B_j \}_{j=1}^{\infty}$ with the bounded intersection property and 
$(p(\cdot), p_0, D)$ - atoms $a_j$ supported on $B_j$ such that $f = \displaystyle{\sum_{j=1}^{\infty} \lambda_j a_j}$ converges 
in $L^{p_0}(\mathbb{H}^{n})$ and
\begin{equation} \label{Hp norm atomic}
\mathcal{A} \left( \{ \lambda_j \}_{j=1}^{\infty}, \{ B_j \}_{j=1}^{\infty}, p(\cdot) \right) 
\lesssim \| f \|_{H^{p(\cdot)}(\mathbb{H}^{n})},
\end{equation}
where the implicit constant in (\ref{Hp norm atomic}) is independent of $\{ \lambda_j \}_{j=1}^{\infty}$, $\{ B_j \}_{j=1}^{\infty}$, and 
$f$.
\end{theorem}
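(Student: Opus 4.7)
I will prove Theorem \ref{atomic decomp} by running the Calder\'on-Zygmund level-set construction of an atomic decomposition, adapted to the homogeneous group $\mathbb{H}^n$ and to the variable-exponent framework of \cite{Fang,Pablo,Nakai}. The role of Euclidean balls is taken by the $\rho$-balls, ordinary polynomials are replaced by the homogeneous monomials $z^I$ with $d(I)\leq D$, and every estimate is controlled by the grand maximal function $\mathcal{M}_N$ from Definition \ref{Dp def}. Concretely, for each $k\in\mathbb{Z}$ set $\Omega_k := \{\mathcal{M}_N f > 2^k\}$. The $L^{p_0}$-boundedness of $\mathcal{M}_N$ (a consequence of Proposition \ref{fract max op} with $\alpha=0$ via the standard pointwise comparison between $\mathcal{M}_N$ and the Hardy-Littlewood maximal operator) together with $f\in L^{p_0}$ makes each $\Omega_k$ an open set of finite measure. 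Apply the Whitney covering lemma on $\mathbb{H}^n$ to cover $\Omega_k$ by $\rho$-balls $B_j^k = B_{\delta_j^k}(x_j^k,t_j^k)$ whose fixed enlargements have the bounded intersection property, with $\delta_j^k$ comparable to the $\rho$-distance from $B_j^k$ to $\Omega_k^c$. Choose a $C^\infty$ partition of unity $\{\zeta_j^k\}$ subordinate to $\{B_j^k\}$ with the homogeneous derivative bounds $|X^I\zeta_j^k|\lesssim(\delta_j^k)^{-d(I)}$.

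\textbf{Bad parts, good parts, telescoping.} For each $(k,j)$ define $b_j^k := (f-P_j^k)\zeta_j^k$, where $P_j^k$ is the unique polynomial in the homogeneous monomials $z^I$ of degree $d(I)\leq D$ satisfying $\int b_j^k\, z^I \, dz = 0$ for all such $I$; existence and uniqueness of $P_j^k$ come from a Gram-Schmidt step in the weighted space $L^2(\zeta_j^k\,dz)$. Set $g_k := f\chi_{\Omega_k^c} + \sum_j P_j^k \zeta_j^k$. Standard pointwise estimates, based on the fact that each $B_j^k$ touches a point where $\mathcal{M}_N f \leq 2^k$, yield $\|g_k\|_\infty \lesssim 2^k$ and the limits $g_k\to f$ in $L^{p_0}$ as $k\to\infty$ and $g_k\to 0$ in $L^{p_0}$ as $k\to-\infty$. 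Hence the telescoping identity $f=\sum_k (g_{k+1}-g_k) = \sum_{k,j} c_j^k$ holds in $L^{p_0}$, where each $c_j^k$ is supported in a fixed dilate $\widetilde{B}_j^k$ of $B_j^k$, has vanishing moments up to degree $D$, and satisfies $\|c_j^k\|_\infty \lesssim 2^k$. Relabelling the double index as a single index $j$, set
$$
\lambda_j := C\cdot 2^{k_j}\,\|\chi_{\widetilde{B}_j}\|_{L^{p(\cdot)}}, \qquad a_j := \lambda_j^{-1} c_j,
$$
so that each $a_j$ fulfils conditions $a_1)$--$a_3)$ of Definition \ref{atom def} (the $L^\infty$-bound on $c_j$ gives the $L^{p_0}$ size condition $a_2)$ on $a_j$).

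\textbf{The $\mathcal{A}$-bound and main obstacle.} What remains is the estimate $\mathcal{A}(\{\lambda_j\},\{\widetilde{B}_j\},p(\cdot))\lesssim \|f\|_{H^{p(\cdot)}}$. By Lemma \ref{potencia s}(iv) it suffices to bound $\bigl\|\sum_{k,j}2^{k\underline{p}}\,\chi_{\widetilde{B}_j^k}\bigr\|_{L^{p(\cdot)/\underline{p}}}$. The bounded intersection of $\{\widetilde{B}_j^k\}_j$ at fixed $k$ gives $\sum_j \chi_{\widetilde{B}_j^k}\lesssim \chi_{\Omega_k}$, and then $\sum_k 2^{k\underline{p}}\chi_{\Omega_k}(x,t)\lesssim (\mathcal{M}_N f(x,t))^{\underline{p}}$; taking $L^{p(\cdot)/\underline{p}}$-norms and applying Lemma \ref{potencia s}(iv) once more produces the upper bound $\|\mathcal{M}_N f\|_{L^{p(\cdot)}} = \|f\|_{H^{p(\cdot)}}$. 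The delicate point is the legitimate replacement of $\chi_{\widetilde{B}_j^k}$ by $\chi_{\Omega_k}$ \emph{inside} the variable-exponent norm while keeping the normalizing factor $\|\chi_{\widetilde{B}_j^k}\|_{L^{p(\cdot)}}$ that was absorbed into $\lambda_j$; this relies crucially on the log-H\"older hypothesis $p(\cdot)\in\mathcal{P}^{\log}(\mathbb{H}^n)$, which forces $\|\chi_B\|_{L^{p(\cdot)}}\approx |B|^{1/p(x_B,t_B)}$ (an analogue of \cite[Theorem 4.2]{Fang}), together with the Fefferman-Stein vector-valued maximal inequality on $L^{p(\cdot)/\underline{p}}(\mathbb{H}^n)$. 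The secondary technical point is to ensure that the threshold $D\geq \mathcal{D}_{p(\cdot)}$ is large enough for the polynomial projection to dominate $f-P_j^k$ on $B_j^k$ at the right scale; this is where the definition of $\mathcal{D}_{p(\cdot)}$ enters the construction.
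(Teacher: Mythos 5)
The paper offers no proof of Theorem \ref{atomic decomp}: it is stated as a quotation of the atomic decomposition established in \cite{Fang} (see also \cite[Theorem 5.5]{Pablo}), and the argument given there is precisely the Calder\'on--Zygmund/Whitney level-set construction you outline, so your proposal follows essentially the same route as the source on which the paper relies. Your sketch is sound in outline; the one inaccuracy is minor and does not affect correctness: in the final $\mathcal{A}$-estimate the normalizing factors $\|\chi_{\widetilde{B}_j^k}\|_{L^{p(\cdot)}}$ cancel exactly against your choice of $\lambda_j$, so that step needs only the pointwise bound $\sum_j \chi_{\widetilde{B}_j^k} \lesssim \chi_{\Omega_k}$, the estimate $\sum_k 2^{k\underline{p}}\chi_{\Omega_k} \lesssim (\mathcal{M}_N f)^{\underline{p}}$, and monotonicity of the Luxemburg norm --- not the log-H\"older condition or the Fefferman--Stein inequality, which enter elsewhere (e.g.\ in the reconstruction direction and in Lemma \ref{B star}).
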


\begin{proposition} \label{dense set}
Let $1 < p_0 < \infty$ and $p(\cdot) \in \mathcal{P}^{\log}(\mathbb{H}^{n})$ with $0 < p_{-} \leq p_{+} < \infty$. Then
$H^{p(\cdot)}(\mathbb{H}^{n}) \cap L^{p_0}(\mathbb{H}^{n}) \subset H^{p(\cdot)}(\mathbb{H}^{n})$ densely.
\end{proposition}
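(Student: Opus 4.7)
The goal is to show that every $f \in H^{p(\cdot)}(\mathbb{H}^{n})$ can be approximated in the $H^{p(\cdot)}$ quasinorm by elements of $H^{p(\cdot)}(\mathbb{H}^{n}) \cap L^{p_0}(\mathbb{H}^{n})$. My plan is to realize $f$ as an atomic sum and then truncate the sum at a finite index.

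More precisely, I would invoke the atomic decomposition of \cite{Fang} in its full strength: although Theorem \ref{atomic decomp} is stated only for $f \in H^{p(\cdot)} \cap L^{p_0}$, the underlying Calder\'on--Zygmund construction (built from the level sets of the grand maximal function $\mathcal{M}_N f$) produces, for any $f \in H^{p(\cdot)}(\mathbb{H}^{n})$, a decomposition $f = \sum_{j=1}^{\infty} \lambda_j a_j$ converging in $\mathcal{S}'(\mathbb{H}^{n})$, where the $a_j$ are $(p(\cdot),p_0,D)$-atoms supported in $\rho$-balls $B_j$ and $\mathcal{A}(\{\lambda_j\},\{B_j\},p(\cdot)) \lesssim \|f\|_{H^{p(\cdot)}}$.

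Setting $f_N := \sum_{j=1}^{N} \lambda_j a_j$, I would check three things. First, $f_N \in L^{p_0}(\mathbb{H}^{n})$: by condition $a_2)$ of Definition \ref{atom def}, each atom $a_j$ is an $L^{p_0}$-function with $\supp(a_j)\subset B_j$, so a finite linear combination of atoms is automatically in $L^{p_0}$. Second, $f_N \in H^{p(\cdot)}(\mathbb{H}^{n})$: each $(p(\cdot),p_0,D)$-atom has uniformly bounded $H^{p(\cdot)}$-quasinorm, and $H^{p(\cdot)}$ is quasi-Banach. Third, $f_N \to f$ in $H^{p(\cdot)}$: this follows from the tail estimate
\[
\|f - f_N\|_{H^{p(\cdot)}} \lesssim \mathcal{A}\bigl(\{\lambda_j\}_{j>N},\{B_j\}_{j>N},p(\cdot)\bigr),
\]
combined with the dominated convergence theorem for the Luxemburg norm, applied to the partial sums of \eqref{cantidad A} (the total sum is finite because $\mathcal{A}(\{\lambda_j\},\{B_j\},p(\cdot)) \lesssim \|f\|_{H^{p(\cdot)}} < \infty$).

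The main obstacle I foresee is legitimising the atomic decomposition for a general $f \in H^{p(\cdot)}$ without the $L^{p_0}$-membership built into Theorem \ref{atomic decomp}. One route is to extract from \cite{Fang} the convergence of the Calder\'on--Zygmund atomic decomposition directly in the $H^{p(\cdot)}$-quasinorm, which is implicit in the proof there. An alternative, which sidesteps this issue, is a direct smoothing argument: pick $\phi \in \mathcal{S}(\mathbb{H}^{n})$ with $\int\phi = 1$, form $f_\varepsilon = f \ast \phi_\varepsilon$ (so that $|f_\varepsilon| \lesssim \mathcal{M}_N f$ pointwise, hence $f_\varepsilon \in L^{p(\cdot)}$), multiply by a smooth cutoff $\eta_R \in C_c^{\infty}(\mathbb{H}^{n})$ with $\eta_R \equiv 1$ on the $\rho$-ball $B_R(0,0)$ and $\supp(\eta_R) \subset B_{2R}(0,0)$ to land in $L^{p_0}$, and then verify that $\eta_R f_\varepsilon \to f$ in $H^{p(\cdot)}$ as $\varepsilon \to 0$ and $R \to \infty$; the key convergence is again dominated convergence on the maximal-function side, which is available under the log-H\"older hypothesis on $p(\cdot)$.
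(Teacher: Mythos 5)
The paper itself gives no self-contained argument for this proposition: it only points to p.~3693 of Nakai--Sawano, where the density is obtained from the Calder\'on--Zygmund decomposition of $f$ at height $\lambda$ relative to the grand maximal function. One writes $f = g^{\lambda} + b^{\lambda}$, shows that the good part $g^{\lambda}$ is a bounded function belonging to $L^{p_0}\cap H^{p(\cdot)}$, and that $\|b^{\lambda}\|_{H^{p(\cdot)}} \lesssim \|\chi_{\{\mathcal{M}_N f>\lambda\}}\,\mathcal{M}_N f\|_{L^{p(\cdot)}} \to 0$ as $\lambda\to\infty$. Your first route is a cousin of this argument, but as written it leans on an input you have not secured: an atomic decomposition valid for \emph{every} $f\in H^{p(\cdot)}(\mathbb{H}^{n})$, together with the tail estimate $\bigl\|\sum_{j>N}\lambda_j a_j\bigr\|_{H^{p(\cdot)}}\lesssim \mathcal{A}\bigl(\{\lambda_j\}_{j>N},\{B_j\}_{j>N},p(\cdot)\bigr)$. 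Theorem \ref{atomic decomp} is stated only for $f\in H^{p(\cdot)}\cap L^{p_0}$, and in the literature the unrestricted version is typically obtained either \emph{from} the density statement you are trying to prove or by running precisely the $g^{\lambda}$ construction above; so you must either verify that Fang--Zhao establish the unrestricted decomposition (with convergence of the tails in the $H^{p(\cdot)}$-quasinorm) without invoking density, or carry out the $g^{\lambda}$ argument yourself. Granting that input, the rest of your first route is sound: finite sums of atoms lie in $L^{p_0}\cap H^{p(\cdot)}$, and the tails of the quantity (\ref{cantidad A}) tend to zero by dominated convergence in $L^{p(\cdot)/\underline{p}}$, using $p_{+}<\infty$.

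Your fallback route has a genuine gap at the cutoff step. Multiplication by $\eta_R$ is not a bounded operation on Hardy spaces once the exponent reaches $1$ or below: elements of $H^{p}$ with $p\le 1$ carry cancellation constraints, and $\eta_R f_{\varepsilon}$ is a compactly supported function whose integral and higher moments need not vanish, so in general $\eta_R f_{\varepsilon}\notin H^{p(\cdot)}(\mathbb{H}^{n})$ at all, let alone $\eta_R f_{\varepsilon}\to f_{\varepsilon}$ in $H^{p(\cdot)}$ as $R\to\infty$. (Already on $\mathbb{R}$: if $g\in H^{1}\cap L^{1}$ then $\int g=0$, while $\int \eta_R g$ is generically nonzero, so $\eta_R g\notin H^{1}$.) The proposition is claimed for all exponents with $0<p_{-}\le p_{+}<\infty$, so this case cannot be excluded. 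Moreover, the convergence $f\ast\phi_{\varepsilon}\to f$ in $H^{p(\cdot)}$ is itself normally deduced from the atomic decomposition or from the $g^{\lambda}$ approximation, so the smoothing route does not actually sidestep the difficulty it was meant to avoid. I would drop the second route entirely and either complete the first one as indicated or reproduce the Nakai--Sawano $g^{\lambda}$ argument on $\mathbb{H}^{n}$.
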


\begin{proof}
The proof is similar to the one given in \cite[see p. 3693]{Nakai}.
\end{proof}

\section{Main result}

In this section we will prove that the generalized Riesz operator $T_{\alpha, m}$ on $\mathbb{H}^{n}$ can be extended to a bounded operator from variable Hardy spaces into variable Lebesgue spaces. The main tools that we will use are Theorem \ref{Hp0-Lq0}, Proposition \ref{Der Kernel} and the atomic decomposition of $H^{p(\cdot)}(\mathbb{H}^{n})$ established in Theorem \ref{atomic decomp}.

\begin{theorem} \label{Hp-Lq}
Let $0 \leq \alpha < Q$, $m \in \mathbb{N} \cap (1- \frac{\alpha}{Q}, \infty)$ and let $T_{\alpha, \, m}$ be the operator 
defined by (\ref{Tm}). Suppose $p(\cdot) \in \mathcal{P}^{\log}(\mathbb{H}^{n})$ 
such that $p(A_j x, r_j^{-2} t) = p(x,t)$ for every $j=1, ..., m$ and $0 < p_{-} \leq p_{+} < \frac{Q}{\alpha}$. 
If $\frac{1}{q(\cdot)} = \frac{1}{p(\cdot)} - \frac{\alpha}{Q}$, then $T_{\alpha, \, m}$ can be extended to a bounded 
operator from $H^{p(\cdot)}(\mathbb{H}^{n})$ into $L^{q(\cdot)}(\mathbb{H}^{n})$.
\end{theorem}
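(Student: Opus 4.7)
The plan is to combine the atomic decomposition of Theorem \ref{atomic decomp} with the $L^{p_0}\to L^{q_0}$ boundedness of Theorem \ref{Hp0-Lq0} and the kernel-derivative estimate of Proposition \ref{Der Kernel}, exactly as in the classical Folland--Stein scheme adapted to variable exponents. By Proposition \ref{dense set} it is enough to prove
$$\|T_{\alpha,m} f\|_{L^{q(\cdot)}} \lesssim \|f\|_{H^{p(\cdot)}}$$
for $f \in H^{p(\cdot)}(\mathbb{H}^{n}) \cap L^{p_0}(\mathbb{H}^{n})$, where $p_0$ is chosen with $\max(1,p_+) < p_0 < Q/\alpha$ and $1/q_0 = 1/p_0 - \alpha/Q$. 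Theorem \ref{atomic decomp} then gives $f = \sum_j \lambda_j a_j$ in $L^{p_0}$ with $(p(\cdot), p_0, D)$-atoms $a_j$ supported on balls $B_j = B_{\delta_j}(x_j^0, t_j^0)$, and $D \geq \mathcal{D}_{p(\cdot)}$ chosen sufficiently large (to be fixed in Step 3). By Theorem \ref{Hp0-Lq0} the series for $T_{\alpha, m} f$ converges in $L^{q_0}$, so it suffices to control $|T_{\alpha,m} f|$ pointwise in terms of the $\lambda_j$'s and the $B_j$'s.

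The key geometric feature is that $T_{\alpha,m}$ is not a convolution operator: the singularity of the kernel $\prod_k \rho((A_k y, r_k^{-2} s)^{-1} \cdot (x,t))^{-\alpha_k}$ at $(y,s)=(x_j^0,t_j^0)$ is concentrated at the $m$ points $(A_k x_j^0, r_k^{-2}t_j^0)$, $k=1,\dots,m$. Accordingly, for each atom I introduce the enlarged balls
$$B_j^{\ast,k} := B_{\gamma \delta_j}(A_k x_j^0, r_k^{-2} t_j^0), \qquad k=1,\dots,m,$$
with $\gamma$ a large absolute constant, and I split
$$|T_{\alpha,m} a_j| = |T_{\alpha,m} a_j| \chi_{\Omega_j} + |T_{\alpha,m} a_j|\chi_{\Omega_j^c}, \qquad \Omega_j = \bigcup_{k=1}^{m} B_j^{\ast,k}.$$

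For the local piece over each $B_j^{\ast,k}$, Theorem \ref{Hp0-Lq0} plus Hölder's inequality and the size estimate $\|a_j\|_{L^{p_0}} \leq |B_j|^{1/p_0}/\|\chi_{B_j}\|_{L^{p(\cdot)}}$ yield $\|T_{\alpha,m} a_j \, \chi_{B_j^{\ast,k}}\|_{L^{q_0}} \lesssim \lambda_j |B_j^{\ast,k}|^{1/q_0}/\|\chi_{B_j^{\ast,k}}\|_{L^{p(\cdot)}}$ (here I use $|B_j|=|B_j^{\ast,k}|$ up to a constant and the invariance $p(A_k x, r_k^{-2} t)=p(x,t)$ to replace $\|\chi_{B_j}\|_{L^{p(\cdot)}}$ by $\|\chi_{B_j^{\ast,k}}\|_{L^{p(\cdot)}}$, up to constants). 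The standard variable-exponent sub-additivity lemma (which on $\mathbb{H}^n$ follows from \cite[Theorem 4.2]{Fang}) then bounds the local contribution by $\mathcal{A}(\{\lambda_j\},\{B_j^{\ast,k}\},p(\cdot))$, and Lemma \ref{B star}(i)-(ii) turns this back into $\mathcal{A}(\{\lambda_j\},\{B_j\},p(\cdot))$.

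For the global piece, I use the translated atom $a_{j,(x_j^0,t_j^0)}$ of Remark \ref{atomo trasladado}, write
$$T_{\alpha,m} a_j(x,t) = \int a_{j,(x_j^0,t_j^0)}(y',s') \, K_j(y',s';x,t) \, dy'\,ds',$$
with $K_j(y',s';x,t) = \prod_k \rho((A_k((x_j^0,t_j^0)\cdot(y',s')),r_k^{-2}s)^{-1}\cdot(x,t))^{-\alpha_k}$ appropriately interpreted, and subtract the Taylor polynomial in $(y',s')$ of degree $\leq D$ at the origin (allowed by the moment condition $a_3$). The Taylor remainder is controlled by a $(y,s)$-derivative of the kernel of homogeneous degree $D+1$; passing those derivatives through the linear changes $y \mapsto A_k y$, $s \mapsto r_k^{-2} s$ and using the left-invariance of the $X_i$'s reduces the matter to the $(x,t)$-derivative bound of Proposition \ref{Der Kernel}. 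Choosing $D$ with $D+1 > Q/p_- - Q$ and combining with $(x,t)\in\Omega_j^c$, this yields a pointwise estimate of the form
$$|T_{\alpha,m} a_j(x,t)|\chi_{\Omega_j^c}(x,t) \lesssim \sum_{k=1}^{m} \frac{\lambda_j \,\delta_j^{\,D+1}}{\|\chi_{B_j}\|_{L^{p(\cdot)}}}\cdot\frac{1}{\rho((A_kx_j^0,r_k^{-2}t_j^0)^{-1}\cdot(x,t))^{\,Q-\alpha+D+1}},$$
which is dominated by a fractional power of $M\chi_{B_j^{\ast,k}}$ with exponent strictly larger than $p_-/q_-$. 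A vector-valued maximal (Fefferman--Stein) inequality in $L^{q(\cdot)/\underline{p}}$---available under $p(\cdot)\in\mathcal{P}^{\log}(\mathbb{H}^n)$ by the machinery set up in \cite{Fang,Pablo}---then bounds the sum over $j$ in $L^{q(\cdot)}$ by $\mathcal{A}(\{\lambda_j\},\{B_j^{\ast,k}\},p(\cdot))$, which by Lemma \ref{B star} and Theorem \ref{atomic decomp} is $\lesssim \|f\|_{H^{p(\cdot)}}$.

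The principal obstacle is Step 5: converting the $(y,s)$-Taylor remainder of the \emph{non-convolution} kernel into an expression controlled by Proposition \ref{Der Kernel} (which only provides $(x,t)$-derivatives). This is the place where the hypotheses $A_k\in Sp(2n,\mathbb{R})\cap SO(2n)$ (resp.\ $A_k = r_k^{-1}I$ with $r_i^2\neq r_j^2$ for $i\neq j$) enter substantively: the former makes the change of variable $y\mapsto A_k y$ an isometry of the Koranyi norm compatible with the group law, and the latter guarantees that the $m$ singular centers $(A_k x_j^0,r_k^{-2}t_j^0)$ stay well-separated so that off $\Omega_j$ one can genuinely exploit the decay coming from all $m$ factors simultaneously.
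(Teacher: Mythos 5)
Your proposal is correct and follows essentially the same route as the paper: atomic decomposition, the $m$ enlarged balls centered at the transformed points $(A_kx_j^0,r_k^{-2}t_j^0)$, the $L^{p_0}\to L^{q_0}$ bound plus Lemma \ref{B star} for the local piece, and the moment/Taylor argument combined with Proposition \ref{Der Kernel} (made applicable by the symmetry $\rho(w^{-1})=\rho(w)$ and the compatibility of the $A_k$'s and dilations with the group law) for the global piece. The only real divergence is cosmetic: the paper dominates the far-field term by $\bigl(M_{\alpha Q/(Q+N)}\chi_{B_j}\bigr)^{(Q+N)/Q}$, which matches both powers of $\delta_j$ and $\rho$ exactly, whereas your plain $(M\chi_{B_j^{\ast,k}})^{u}$ leaves an extra factor $\delta_j^{\alpha}$ that must be absorbed via $\|\chi_{B}\|_{L^{q(\cdot)}}\approx|B|^{-\alpha/Q}\|\chi_{B}\|_{L^{p(\cdot)}}$ --- a workable but slightly messier bookkeeping step.
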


\begin{proof} We take $N\in \mathbb{N}$ such that $0 < \frac{Q}{Q+N} < p_{-}$, thus $N-1 \geq \mathcal{D}_{p(\cdot)}$. Then, 
given $f \in H^{p(\cdot)}(\mathbb{H}^{n}) \cap L^{p_0} (\mathbb{H}^{n})$ (with $p_0 > 1$), by Theorem \ref{atomic decomp} with $D=N-1$, there exist a sequence of nonnegative numbers $\{ \lambda_j \}_{j=1}^{\infty}$, a sequence of 
$\rho$ - balls $\{ B_j \}_{j=1}^{\infty}$ and $(p(\cdot), p_0, N-1)$ atoms $a_j$ supported on $B_j$ such that 
$f = \displaystyle{\sum_{j=1}^{\infty} \lambda_j a_j}$ converges in $L^{p_0}(\mathbb{H}^{n})$ and
\begin{equation} \label{atomic Hp norm}
\mathcal{A} \left( \{ \lambda_j \}_{j=1}^{\infty}, \{ B_j \}_{j=1}^{\infty}, p(\cdot) \right) 
\lesssim \| f \|_{H^{p(\cdot)}(\mathbb{H}^{n})}.
\end{equation}
If $0 < \alpha < Q$, we take $\max\{1, p_{+} \} < p_0 < \frac{Q}{\alpha}$. If $\alpha = 0$, we take $p_0 = 2$. Then, by Theorem 
\ref{Hp0-Lq0}, the operator $T_{\alpha, m}$ is bounded from $L^{p_0}(\mathbb{H}^{n})$ to $L^{q_0}(\mathbb{H}^{n})$ for $1 < p_0 < \frac{Q}{\alpha}$ and $\frac{1}{q_0} = \frac{1}{p_0} - \frac{\alpha}{Q}$. Since $f = \displaystyle{\sum_{j=1}^{\infty} \lambda_j a_j}$ converges in 
$L^{p_0}(\mathbb{H}^{n})$, we have
\[
|T_{\alpha, \, m} f(x,t)| \leq \sum_j \lambda_j |T_{\alpha, \, m} a_j(x,t)|, \,\,\, a.e. \,\, (x,t) \in \mathbb{H}^{n}.
\]
Let $\beta$ be the constant given in \cite[Corollary 1.44]{Folland}, we observe that $\beta \geq 1$ (see \cite[p. 29]{Folland}), and let
$\gamma : = (1 + \max_{1\leq i \leq m} \{ r_i\}) / (\min_{1\leq i \leq m} \{ r_i\})$. Given an atom $a_j$ supported on the $\rho$ - ball $B_j = B_{\delta_j}(x^{0}_j, t^{0}_j)$, for every $i=1, ..., m$ let 
$B_{j \, i}^{\ast} = 2 \beta^{N} \gamma B_{\delta_j}(A_i x^{0}_j, r_{i}^{-2} t^{0}_j )$, where $2\beta^{N} \gamma B$ is the $\rho$ - ball with the same center as $B$ but whose radius is expanded by the factor $2\beta^{N} \gamma$. Now we decompose 
$\mathbb{H}^{n} = \cup_{i=1}^{m} B_{j \, i}^{\ast} \cup \Omega_j$, where $\Omega_j = \left( \cup_{i=1}^{m} B_{j \, i}^{\ast} \right)^{c}$.
Then, 
for $\frac{1}{q(\cdot)} = \frac{1}{p(\cdot)} - \frac{\alpha}{Q}$
\begin{equation} \label{J1 y J2}
\| T_{\alpha, \, m} f \|_{L^{q(\cdot)}} \leq \sum_{i=1}^{m} \left\| \sum_{j=1}^{\infty} \lambda_j \chi_{B_{j \, i}^{\ast}} |T_{\alpha, \, m} a_j| \right\|_{L^{q(\cdot)}}  + \left\| \sum_{j=1}^{\infty} \lambda_j \chi_{\Omega_j} |T_{\alpha, \, m} a_j| \right\|_{L^{q(\cdot)}} =: 
J_1 + J_2, 
\end{equation}
To estimate $J_1$ we once again apply, for the case $0 < \alpha < Q$, Theorem \ref{Hp0-Lq0} with 
$q_0  > \max\{ \frac{Q}{Q - \alpha}, q_{+} \}$ and $\frac{1}{p_0} := \frac{1}{q_0} + \frac{\alpha}{Q}$ (or with $q_0=p_0=2$, if $\alpha =0$). So,
\[
\left\| (T_{\alpha, \, m} a_j)^{q_{*}} \right\|_{L^{q_{0}/q_{*}}(B_{j \, i}^{\ast})} = 
\left\| T_{\alpha,  m} a_j \right\|_{L^{q_{0}}(B_{j \, i}^{\ast})}^{q_{*}} \lesssim \left\| a_j \right\|_{L^{p_{0}}}^{q_{*}} \lesssim 
\frac{| B_j |^{\frac{q_{*}}{p_{0}}}}{\left\| \chi _{B_j }\right\|_{L^{p(\cdot)}}^{q_{*}}} \lesssim 
\frac{\left| B_{j \, i}^{\ast} \right|^{\frac{q_{*}}{q_{0}}}}{\left\| \chi_{B_{j \, i}^{\ast}} \right\|_{L^{q(\cdot)/q_{*}}}},
\]
where $0 < q_{*} < \underline{q}$ is fixed and the last inequality follows from the estimate $\| \chi_B \|_{L^{q(\cdot)}} \approx |B|^{-\alpha/Q} \| \chi_B \|_{L^{p(\cdot)}}$ (which is a consequence of \cite[Lemma 4.1]{Fang}), Lemma \ref{potencia s}-(iv), and Lemma 3.2 in \cite{Pablo} applied to the exponent $q(\cdot)/q_{*}$. Now, since $0 < q_{*} < 1$, we apply the $q_{\ast}$-inequality and Proposition 3.3 in \cite{Pablo} with $b_j = \left( \chi_{B_{j \, i}^{\ast}} \cdot |T_{\alpha, \, m} a_j| \right)^{q_{*}}$, 
$A_j = \left\| \chi_{B_{j \, i}^{\ast}} \right\|_{L^{q(\cdot)/q_{*}}}^{-1}$ and $s= q_0/q_{*}$, to obtain
\[
J_1 \lesssim \sum_{i=1}^{m} \left\| \sum_{j} \left(\lambda_j \, \chi_{B_{j \, i}^{\ast}} \, |T_{\alpha, \, m} a_j| \right)^{q_{*}} \right\|^{1/q_{*}}_{L^{q(\cdot)/q_{*}}} \lesssim \sum_{i=1}^{m} \left\| \sum_{j} \left( \frac{\lambda_j}{\left\| \chi_{B_{j \, i}^{\ast}} \right\|_{L^{q(\cdot)}}} \right)^{q_{*}} \chi_{B_{j \, i}^{\ast}} \right\|^{1/q_{*}}_{L^{q(\cdot)/q_{*}}}.
\]
From this inequality, Lemma 5.7 in \cite{Pablo} and Lemma \ref{B star} applied to $q(\cdot)$ give
\[
J_1 \lesssim \sum_{i=1}^{m} \mathcal{A}\left( \{ \lambda_j \}_{j=1}^{\infty}, \{ B_{j \, i}^{\ast} \}_{j=1}^{\infty}, q(\cdot) \right) 
\lesssim
m \, \mathcal{A}\left( \{ \lambda_j \}_{j=1}^{\infty}, \{ B_{j} \}_{j=1}^{\infty}, q(\cdot) \right).
\]
Then, Proposition 5.8 in \cite{Pablo} and (\ref{atomic Hp norm}) give
\begin{equation} \label{J1}
J_1 \lesssim \mathcal{A}\left( \{ \lambda_j \}_{j=1}^{\infty}, \{ B_j \}_{j=1}^{\infty}, q(\cdot) \right)
\lesssim \mathcal{A}\left( \{ \lambda_j \}_{j=1}^{\infty}, \{ B_j \}_{j=1}^{\infty}, p(\cdot) \right) 
\lesssim \| f \|_{H^{p(\cdot)}}.
\end{equation}
Now, we proceed to estimate $J_2$. For them, we first consider a single $(p(\cdot), p_0, N-1)$ - atom $a(\cdot)$ supported on the $\rho$ - ball 
$B = B_{\delta}(x_0, t_0)$ and let $\widetilde{\Omega} = \left( \cup_{i=1}^{m} B_i^{\ast} \right)^{c}$, where 
$B_{i}^{\ast} = 2 \beta^{N} \gamma B_{\delta}(A_i x_0, r_i^{-2}t_0)$. Then, we decompose 
$\widetilde{\Omega} = \cup_{k=1}^{m} \widetilde{\Omega}_k$ with
\[
\widetilde{\Omega}_k = \left\{(x,t) \in \widetilde{\Omega} : \rho \left( (A_k^{-1} x, r_k^{2} t)^{-1} \cdot (x_0, t_0) \right) \leq 
\rho \left( (A_j^{-1} x, r_j^{2} t)^{-1} \cdot (x_0, t_0) \right) \,\, \text{for all} \,\, j \neq k \right\}.
\]
Remark \ref{cambio de centro} and a change of variable lead to
\begin{eqnarray*}
T_{\alpha, m} a(x,t) &=& \int_{B_{\delta}(x_0, t_0)} a(y,s) \prod_{j=1}^{m} \rho \left((A_jy, r_j^{-2} s)^{-1} \cdot (x,t) \right)^{-\alpha_j} 
\, dy \, ds \\
&=& C \int_{B_{\delta}(0, 0)} a((x_0, t_0) \cdot (y,s)) \prod_{j=1}^{m} \rho \left((A_j^{-1} x, r_j^{2} t)^{-1} \cdot (x_0, t_0) \cdot (y,s) \right)^{-\alpha_j} \, dy \, ds, 
\end{eqnarray*}
for all $(x, t) \in  \widetilde{\Omega}$, where $C= r_1^{-\alpha_1} \cdot \cdot \cdot r_m^{-\alpha_1}$. By Remark \ref{atomo trasladado}, for each $(x, t) \in  \widetilde{\Omega}$, it follows that
\begin{equation} \label{Taz}
T_{\alpha, m} a(x,t) = C \int_{B_{\delta}(0, 0)} a((x_0, t_0) \cdot (y,s)) 
\end{equation}
\[
\times \left[\prod_{j=1}^{m} \rho \left((A_j^{-1} x, r_j^{2} t)^{-1} \cdot (x_0, t_0) \cdot (y,s) \right)^{-\alpha_j} - q_{N}(y,s) \right] \, dy \, ds, 
\]
where $(y,s) \to q_N(y,s)$ is the left Taylor polynomial of the function 
\[
(y,s) \to \prod_{j=1}^{m} \rho \left((A_j^{-1} x, r_j^{2} t)^{-1} \cdot (x_0, t_0) \cdot (y,s) \right)^{-\alpha_j} 
\]
at $e=(0,0)$ of homogeneous degree $N-1$. Then by the left Taylor inequality in \cite[Corollary 1.44]{Folland},
\begin{equation} \label{Taylor ineq}
\left| \prod_{j=1}^{m} \rho \left((A_j^{-1} x, r_j^{2} t)^{-1} \cdot (x_0, t_0) \cdot (y,s) \right)^{-\alpha_j} - q_{N}(y,s) \right| 
\end{equation}
\[
\lesssim \rho(y,s)^{N} 
\sup_{\rho(z,u) \leq \beta^{N}\rho(y,s), \, d(I)=N} \left|X^{I} \left(
\prod_{j=1}^{m} \rho \left((A_j^{-1} x, r_j^{2} t)^{-1} \cdot (x_0, t_0) \cdot (z,u) \right)^{-\alpha_j} \right) \right|.
\]
Now, for $(y,s) \in B_{\delta}(0, 0)$, $(x, t)^{-1} \cdot (A_k x_0, r_k^{-2} t_0) \notin  2\beta^{N} \gamma B_{\delta}(0, 0)$ and 
$\rho(z,u) \leq \beta^{N} \rho(y,s)$, we have $\rho((A_k^{-1} x, r_k^{2} t)^{-1} \cdot (x_0, t_0)) \geq 2 r_k \gamma \rho(z,u)$, 
where $r_k \gamma > 1$, and hence 
\begin{equation} \label{omega k}
\rho \left( (A_k^{-1} x, r_k^{2} t)^{-1} \cdot (x_0, t_0) \cdot (z,u) \right) \geq \frac{2r_k \gamma -1}{2r_k \gamma}
\rho((A_k^{-1} x, r_k^{2} t)^{-1} \cdot (x_0, t_0)).
\end{equation}
Then for $(x,t) \in \widetilde{\Omega}_k$, by (\ref{Taylor ineq}), Proposition \ref{Der Kernel} applied on the variable $(z,u)$ and (\ref{omega k}), we get
\[
\left| \prod_{j=1}^{m} \rho \left((A_j^{-1} x, r_j^{2} t)^{-1} \cdot (x_0, t_0) \cdot (y,s) \right)^{-\alpha_j} - q_{N}(y,s) \right| 
\lesssim \delta^{N} \rho \left( (A_k^{-1} x, r_k^{2} t)^{-1} \cdot ( x_0, t_0) \right)^{\alpha - Q - N}.
\]
This estimate and (\ref{Taz}) lead to
\begin{eqnarray*}
|T_{\alpha, m}a(x,t)| &\lesssim& \delta^{N} \rho \left( (A_k^{-1} x, r_k^{2} t)^{-1} \cdot ( x_0, t_0) \right)^{\alpha-Q-N} \| a \|_{L^{1}} \\
&\lesssim& \delta^{N} \rho\left( (A_k^{-1} x, r_k^{2} t)^{-1} \cdot ( x_0, t_0) \right)^{\alpha-Q-N} |B|^{1-\frac{1}{p_0}} \| a \|_{L^{p_0}} \\
&\lesssim& \frac{\delta^{N+Q}}{\| \chi_{B} \|_{L^{p(\cdot)}}} \rho\left( (A_k^{-1} x, r_k^{2} t)^{-1} \cdot ( x_0, t_0) \right)^{\alpha-Q-N} \\
&\lesssim& \frac{\left( M_{\frac{\alpha Q}{Q+N}}(\chi_{B})(A_k^{-1} x, r_k^{2} t) \right)^{\frac{Q+N}{Q}}}{\| \chi_{B} \|_{L^{p(\cdot)}}}, \,\,\,\, \forall \,\, (x,t) \in \widetilde{\Omega}_k.
\end{eqnarray*}
Finally, by applying the above reasoning with $a_j(\cdot)$ instead of $a(\cdot)$, $\widetilde{\Omega} = \Omega_j$ and $j \in \mathbb{N}$, we have that $\Omega_j = \cup_{k=1}^{m} \Omega_{j \, k}$ and
\begin{equation} \label{L2 estimate}
|T_{\alpha, m}a_j(x,t)| \lesssim \frac{\left( M_{\frac{\alpha Q}{Q+N}}(\chi_{B_j})(A_k^{-1} x, r_{k}^{2}t) \right)^{\frac{Q+N}{Q}}}{\| \chi_{B_j} \|_{L^{p(\cdot)}}}, \,\,\,\, \text{for all} \,\, (x,t) \in \Omega_{j \, k}.
\end{equation}
From (\ref{L2 estimate}), (\ref{J1 y J2}) and that $q(A_k x, r_{k}^{-1} t) = q(x,t)$ for all $(x,t)$ and all $k=1, ..., m$, we obtain
\[
J_2 \lesssim \sum_{k=1}^{m} \left\| \sum_j \lambda_j \chi_{\Omega_{j \, k}} |T_{\alpha, m}a_j(\cdot)| \right\|_{L^{q(\cdot)}}
\lesssim \sum_{k=1}^{m} \left\| \sum_j \lambda_j \chi_{\Omega_{j \, k}}
\frac{\left( M_{\frac{\alpha Q}{Q+N}}(\chi_{B_j})(A_k^{-1}\cdot, r_k^{2} \cdot) \right)^{\frac{Q+N}{Q}}}{\| \chi_{B_j} \|_{L^{p(\cdot)}}} \right\|_{L^{q(\cdot)}}
\]
\[ 
\lesssim m \left\| \left\{ \sum_j \lambda_j 
\frac{\left( M_{\frac{\alpha Q}{Q+N}}(\chi_{B_j})(\cdot) \right)^{\frac{Q+N}{Q}}}{\| \chi_{B_j} \|_{L^{p(\cdot)}}} \right\}^{\frac{Q}{Q+N}} \right\|_{L^{\frac{Q+N}{Q}q(\cdot)}}^{\frac{Q+N}{Q}}.
\]
Since $1 < \frac{Q+N}{Q} p_{-} \leq \frac{Q+N}{Q} p_{+} < \frac{Q+N}{\alpha}$, Theorem 4.4 in \cite{Pablo} gives
\begin{equation} \label{J2}
J_2 \lesssim 
\left\| \left\{ \sum_j \lambda_j 
\frac{\chi_{B_j}}{\| \chi_{B_j} \|_{L^{p(\cdot)}}} \right\}^{\frac{Q}{Q+N}} \right\|_{L^{\frac{Q+N}{Q}p(\cdot)}}^{\frac{Q+N}{Q}} \lesssim
\left\| \sum_j \lambda_j \frac{\chi_{B_j}}{\| \chi_{B_j} \|_{L^{p(\cdot)}}} \right\|_{L^{p(\cdot)}}
\end{equation}
\[
\lesssim
\left\| \left\{ \sum_j \left( \lambda_j \frac{\chi_{B_j}}{\| \chi_{B_j} \|_{L^{p(\cdot)}}} \right)^{\underline{p}}\right\}^{1/\underline{p}} 
\right\|_{L^{p(\cdot)}} = \mathcal{A}\left( \{ \lambda_j \}_{j=1}^{\infty}, \{ B_j \}_{j=1}^{\infty}, p(\cdot) \right) 
\lesssim \| f \|_{H^{p(\cdot)}}.
\]
Then, (\ref{J1 y J2}), (\ref{J1}) and (\ref{J2}) allow us to conclude that
\[
\| T_{\alpha, \, m} f \|_{L^{q(\cdot)}(\mathbb{H}^{n})} \lesssim \| f \|_{H^{p(\cdot)}(\mathbb{H}^{n})},
\]
for all $f \in H^{p(\cdot)}(\mathbb{H}^{n}) \cap L^{p_0}(\mathbb{H}^{n})$. So the theorem follows from Proposition \ref{dense set}.
\end{proof}

\begin{example}
Given $0 < \alpha < Q$, let $r : \mathbb{R} \to (0, \, \infty)$ be an exponent function such that $0 < r_{-} \leq r_{+} < \frac{Q}{\alpha}$ and $r(\cdot)$ is log-H\"older continuous (locally and at infinite) on $\mathbb{R}$, i.e.: 
\begin{equation} \label{log 3}
|r(s) - r(u)| \leq \frac{C}{-\log(|s-u|)} \,\,\,\, \text{for} \,\, |s-u|\leq \frac{1}{2},
\end{equation}
and
\begin{equation} \label{log 4}
|r(s) - r_{\infty}| \leq \frac{C_{\infty}}{\log(e + |s|)} \,\,\,\, \text{for all} \,\, s \in \mathbb{R}.
\end{equation}
Let $p : \mathbb{H}^{n} \to (0, \, \infty)$ be the exponent function defined by 
\begin{equation} \label{exp func}
p(x,t) = r(\rho(x,t)), \,\,\,\, \text{for all} \,\, (x,t) \in \mathbb{H}^{n},
\end{equation}
where $\rho$ is the Koranyi norm on $\mathbb{H}^{n}$. It is clear that $p(Ax,t) = p(x,t)$ for all $(x,t) \in \mathbb{H}^{n}$ and all
$A \in Sp(2n, \mathbb{R}) \cap SO(2n)$. Since $\rho((x,t)^{-1})= \rho(x,t)$ and $|\rho(x,t) - \rho(y,s)| \leq \rho((x,t) \cdot (y,s))$, (\ref{log 3}) implies (\ref{log 1}). That (\ref{log 4}) implies (\ref{log 2}) it is obvious. So, the exponent function $p(\cdot)$ is log-H\"older continuous (locally and at infinite) on $\mathbb{H}^{n}$. Then, the exponent $p(\cdot)$ defined by (\ref{exp func}) satisfies the hypotheses of Theorem \ref{Hp-Lq}. 
\end{example}

%\bigskip
%%%%%%%%%%%% Authors' addresses %%%%%%%%%%%%%
%\address{% First Author
%Departamento de Matem\'atica \\
%Universidad Nacional del Sur \\
%8000 Bah\'{\i}a Blanca, Buenos Aires \\
%Argentina}
%{pablo.rocha@uns.edu.ar}

\end{document}